%% LyX 2.3.4.2 created this file.  For more info, see http://www.lyx.org/.
%% Do not edit unless you really know what you are doing.
\pdfoutput=1
\documentclass[11pt,oneside,reqno,british]{amsart}
\usepackage[T1]{fontenc}
\usepackage[latin9]{inputenc}
\usepackage{color}
\usepackage{babel}
\usepackage{mathtools}
\usepackage{amstext}
\usepackage{amsthm}
\usepackage{amssymb}
\usepackage{cancel}
\usepackage[unicode=true,pdfusetitle,
 bookmarks=true,bookmarksnumbered=false,bookmarksopen=false,
 breaklinks=true,pdfborder={0 0 0},pdfborderstyle={},backref=false,colorlinks=true]
 {hyperref}

\makeatletter
%%%%%%%%%%%%%%%%%%%%%%%%%%%%%% Textclass specific LaTeX commands.
\theoremstyle{plain}
\newtheorem*{thm*}{\protect\theoremname}
\theoremstyle{plain}
\newtheorem{thm}{\protect\theoremname}
\theoremstyle{plain}
\newtheorem{lem}[thm]{\protect\lemmaname}
\theoremstyle{remark}
\newtheorem{rem}[thm]{\protect\remarkname}

%%%%%%%%%%%%%%%%%%%%%%%%%%%%%% User specified LaTeX commands.
\usepackage{calrsfs}
\usepackage{graphicx}
\usepackage{color}
\usepackage{perpage}
\usepackage{upquote}
\usepackage{bbm}
\usepackage[shortlabels]{enumitem}
\usepackage{stmaryrd} %For \llbracket and friends.
\usepackage{extarrows} %For \xlongleftrightarrow
\usepackage{romanbar}
\usepackage{xparse}
\usepackage{hiddennote}
\usepackage{latexml}
%\usepackage[notref,notcite]{showkeys}
%\usepackage{floatflt}

%\usepackage[bbgreekl]{mathbbol}
%For \mathbbl{\Sigma}, but its an ugly sans-serif font not
%visually compatible with \Sigma, so I guess I won't use it.
%There is also a \mbboard package, but its not standard.

%\usepackage[normalweight=Light]{mdsymbol}
%Breaks the usual Sigma! Do not use!

%\usepackage{amsfonts}
%\DeclareSymbolFontAlphabet{\mathbb}{AMSb}
%\DeclareSymbolFontAlphabet{\mathbbl}{bbold}
%These three allow \mathbbl{2} and so on.

\raggedbottom
\MakePerPage{footnote}
\gdef\SetFigFontNFSS#1#2#3#4#5{} %Silence pointless warnings due to xfig
\gdef\SetFigFont#1#2#3#4#5{} %Silence pointless warnings due to xfig
\def\clap#1{\hbox to 0pt{\hss#1\hss}}
%\def\colonqq{\mathrel{\mathop:}=} %\coloneqq is standard.
%\theenumi is what appears when you reference the thing, labelenumi is the appearance in the list

\DeclareMathOperator{\dist}{dist}

\def\sym{\mathfrak{S}}

\definecolor{myblue}{rgb}{0.09,0.32,0.44} %22-84-113
\hypersetup{pdfborder={0 0 0},pdfborderstyle={},colorlinks=true,linkcolor=myblue,citecolor=myblue,urlcolor=blue}

\theoremstyle{remark}
\newtheorem*{qst*}{Question}
\newtheorem*{rmrks*}{Remarks}

\newlength{\tempindent} 
\newcommand{\lazyenum}{
\setlength{\tempindent}{\parindent} 
\begin{enumerate}[leftmargin=0cm,itemindent=0.7cm,labelwidth=\itemindent,labelsep=0cm,align=left,label=\arabic*)]
\setlength{\parskip}{\smallskipamount}
\setlength{\parindent}{\tempindent}
}

%cupdot and bigcupdot
\makeatletter
\def\moverlay{\mathpalette\mov@rlay}
\def\mov@rlay#1#2{\leavevmode\vtop{%
   \baselineskip\z@skip \lineskiplimit-\maxdimen
   \ialign{\hfil$\m@th#1##$\hfil\cr#2\crcr}}}
\newcommand{\charfusion}[3][\mathord]{
    #1{\ifx#1\mathop\vphantom{#2}\fi
        \mathpalette\mov@rlay{#2\cr#3}
      }
    \ifx#1\mathop\expandafter\displaylimits\fi}
\makeatother

%These three are only for ams classes.
\makeatletter
%Check if we use ams classes.
\ifdefined\andify
%Remove the Oxford comma in the author list
\renewcommand{\andify}{%
  \nxandlist{\unskip, }{\unskip{} \@@and~}{\unskip{} \@@and~}}
\def\author@andify{%
  \nxandlist {\unskip ,\penalty-1 \space\ignorespaces}%
    {\unskip {} \@@and~}%
    {\unskip \penalty-2 \space \@@and~}%
}
% Correct a bug related to "with an appendix":
\let\@wraptoccontribs\wraptoccontribs
%Allow subclass[2020]
\@namedef{subjclassname@2020}{%
  \textup{2020} Mathematics Subject Classification}
\fi
\makeatother

%\contrib[With an appendix by]{The Maid of Orleans}

\NewDocumentCommand\afs{vm}{\emph{\href{#1}{#2}}}
% The vm says that the first argument (the url) is verbatim
% and thus allows all characters except unbalanced {}.
% This works almost perfectly: the only hitch I found was 
% that a link broken along lines, while working in the dvi, 
% does not work in the pdf generated by dvips->ps2pdf.
% It's just weird. And adding an ifpdf does not improve 
% things, of course.

\iflatexml
\long\def\HNOC#1#2#3{#2}
\else
\long\def\HNOC#1#2#3{\HiddennoteLink{#1}{\OptionalColour{#2}}{#3}}
\fi

%\ifnum1=2

\makeatother

\providecommand{\lemmaname}{Lemma}
\providecommand{\remarkname}{Remark}
\providecommand{\theoremname}{Theorem}

\begin{document}
\title{Coupled but distant}
\author{Itai Benjamini and Gady Kozma}
\maketitle

\section{Introduction}

Coupling is one of the most useful ideas in probability in general,
and in the study of random walks specifically. The idea is, usually,
that if there exists a coupling that will make two random walkers
reach the same point at the same time with high probability, then
it can be shown that various properties of the random walk do not
depend strongly on the starting point. We skip further discussion
and refer to the books \cite{Lind,LP}.

Is there some use for coupling random walks so that they reach the
same point but not necessarily at the same time? An old idea, floating
since at least the early millenium, that has not found an impressive
application yet is as follows. Assume you prove somehow that random
walk and loop-erased random walk intersect in space. Then you can
couple two loop-erased random walks using Wilson's algorithm. But
since loop-erased random walks exit a sphere at the same place their
generating random walks exit the sphere, what Wilson's algorithm actually
gives you is a coupling of two random walks so that they exit the
sphere at the same point. For most applications, this is as good as
coupling them to get to the same place at the same time. 

Here we ask the opposite question: can one couple two random walkers
so that they do not intersect? Again, we mean by `intersect' getting
to the same spot but not necessarily at the same time (see \cite{AHMWW}
for intersections at the same time). It is certainly not possible
to couple two random walks so that they do not intersect with probability
1, as the first random walk has a positive probability to reach each
neighbour of the starting point of the second random walk, and then
no coupling can prevent an intersection. So the best one might hope
for is a coupling such that, with positive probability the two random
walks do not intersect.

Thus we have reached the following question. Take two random walkers
in $\mathbb{Z}^{d}$ starting from, say, two neighbouring sites. Is
there a coupling such that
\[
\mathbb{P}(R_{1}\cap R_{2}=\emptyset)>0?
\]
This is not possible to achieve when $d=2$, because, with probability
1, $R_{1}$ does a full rotation around the starting point of $R_{2}$
creating a barrier. It is trivial when $d\ge5$ because, as is well-known,
the property holds when $R_{1}$ and $R_{2}$ are independent. Thus
we are left with the cases that $d$ is 3 and $4$. We were not able
to resolve the 3 dimensional case. Thus our result is:
\begin{thm*}
In $\mathbb{Z}^{4}$ it is possible to couple two random walkers so
that they do not ever intersect with positive probability.
\end{thm*}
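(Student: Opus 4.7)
The plan is a multi-scale, inductive coupling along a dyadic sequence of radii. Set $r_k = 2^k$, let $A_k = B(0,r_k) \setminus B(0,r_{k-1})$ be the dyadic annuli, and let $\tau_i^{(k)}$ denote the first exit time of $W_i$ from $B(0,r_k)$. I want to construct the pair $(W_1,W_2)$ inductively so that at every stage the truncated ranges $W_1[0,\tau_1^{(k)}]$ and $W_2[0,\tau_2^{(k)}]$ are disjoint, and show that with positive probability the induction succeeds at every scale; in the limit this yields $R_1 \cap R_2 = \emptyset$.

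First, at some fixed initial scale $k_0$, I would couple $(W_1,W_2)$ up to exit of $B(0,r_{k_0})$ by an ad hoc construction producing disjoint pieces with positive probability --- any reasonable coupling (e.g.\ mutually reflected walkers, or simply a favourable outcome of the independent coupling) suffices on a bounded scale. Then, assuming disjointness up to scale $k-1$, I would extend both walks through $A_k$ using a coupling that preserves disjointness while keeping each marginal distribution a SRW excursion across $A_k$. One cannot simply condition on avoidance (that event typically has probability zero in $\mathbb{Z}^4$), so the joint law in the annulus must be designed so that both marginals remain SRW excursions while the intersection nevertheless carries little mass.

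The quantitative heart of the proof is to show that the conditional failure probability at scale $k$ is summable, say $\alpha_k \le C/k^{1+\varepsilon}$, which then gives $\prod_{k}(1-\alpha_k) > 0$. The naive capacity estimate yields only $\alpha_k = O(1/k)$: the piece $R_1 \cap A_k$ typically has capacity of order $r_k^2/\log r_k$, and a fresh independent SRW excursion across $A_k$ hits a set of that capacity with probability $\asymp 1/k$. Closing this logarithmic gap is the main obstacle --- it is the same criticality of dimension four that governs the study of SRW intersections. I would try to close it by exploiting freedom in the coupling: jointly randomising both the entry points and the shapes of the two excursions via a shift or reflection coupling tied to the geometry of $R_1 \cap A_k$, and extracting a second-moment-type gain; or by arguing that $R_1 \cap A_k$ typically has capacity strictly below the worst-case bound.

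If a direct argument cannot be pushed past the $1/k$ threshold per scale, I would fall back on the Wilson-algorithm framework alluded to in the introduction. There, $W_1$ and $W_2$ are generated via a joint loop-erasure procedure in which one forces the two LERW skeletons to be disjoint and then couples the erased loops attached to each skeleton to avoid the opposite skeleton, exploiting that LERW in $\mathbb{Z}^4$ carries an extra logarithmic gain in the relevant capacity estimates relative to SRW. The divergence of the naive estimate is precisely the critical behaviour of dimension four, and it is also what seems to block a direct attack in dimension three.
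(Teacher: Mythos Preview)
Your multi-scale framework and your diagnosis of the obstacle are both correct, but the two ideas that actually close the gap are missing from the proposal. First, the coupling at each scale is not built by hand (reflection, shift, etc.) but is extracted from Hall's marriage theorem. What one needs to verify Hall's condition is that ``hittable'' walks are rare: for walks $R_1,R_2$ crossing from $\partial B(n)$ to $\partial B(m)$,
\[
\mathbb{P}\big(\mathbb{P}(R_1\cap R_2\neq\emptyset\,|\,R_1)>1-\varepsilon\big)\le \frac{C\varepsilon}{\log^{1/4}n}.
\]
This is the technical heart of the paper. It is proved by a moment argument: if the left-hand side were large, then $k\approx\varepsilon^{-1/6}$ independent copies of $R_2$ would all hit $R_1$ with non-negligible probability, forcing $\mathbb{E}\big(\sum_i|R_1\cap R_i|\big)^r$ to be large; a direct computation of that $r$-th moment (organised via the minimal spanning tree on the intersection points) then gives a contradiction. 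Your ``second-moment-type gain'' intuition is pointing in this direction but does not get there; the explicit reflection/shift couplings you mention cannot beat the honest $1/k$ intersection probability of two independent crossings.

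Second, the scales are $2^{n^2}$, not dyadic $2^n$. With this spacing the marriage-lemma coupling across one shell fails with probability $O(1/\log(2^{n^2}))=O(1/n^2)$, and the probability that the new segment of one walk hits the entire past of the other --- controlled by a separate ``good path'' lemma --- is $O((\log n)^2/n^2)$. Both are summable, giving $\prod_n(1-C(\log n)^2/n^2)>0$. Your dyadic choice cannot be salvaged without this rescaling: the $1/k$ you correctly computed is genuinely non-summable, and no coupling trick inside a single dyadic annulus fixes that. Finally, the Wilson/LERW route you offer as a fallback is precisely the idea the introduction describes as ``has not found an impressive application yet''; the paper does not use it.
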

Currently we have no application for this theorem. 

To get a feeling for the proof, let us remark that we do not construct
the coupling explicitly, but rather use Hall's marriage theorem to
show a coupling exists. To justify the conditions of the theorem,
we need to show that `hittable walks are unlikely'. Namely, suppose
we call a path $\gamma$ hittable if $\mathbb{P}(R\cap\gamma\ne\emptyset)>1-\varepsilon$
for some parameter $\varepsilon>0$, where $R$ is random walk. Then
we need to show that $\mathbb{P}(R$ is hittable$)\ll\varepsilon$.
In a single formula, if $R_{1}$ and $R_{2}$ are two random walks
then we need to show that $\mathbb{P}(\mathbb{P}(R_{1}\cap R_{2}\ne\emptyset\,|\,R_{1})>1-\varepsilon)\ll\varepsilon$.
We prove this by claiming that if it were not so, then one could see
it in the \emph{expected }number of intersections of $R_{1}$ with
\emph{many }independent walks. We then estimate the probability of
this event using moment methods. The rest of the paper is devoted
to the details. But first a few questions.

\begin{qst*}Is it possible to construct such a coupling in a Markovian
way, i.e.\ such that the process $(R_{1},R_{2})$ is a Markov process?
\end{qst*}

\begin{qst*}Is there any transient graph on which it is not possible
to construct such a coupling?\end{qst*}

One way to solve this last question would be to find a transient graph
on which random walk forms a cutset with probability 1 (see also \cite{BGGS}
for a related, but different result). We could not even disprove the
existence of such a graph.

\begin{qst*}Is there any transient graph on which the trace of random
walk contains a cutset separating the starting point from infinity
with probability 1?\end{qst*}

\subsection{Reading this paper}

\iflatexml\else\SetOptionalColour{red}\fi

The default format for this paper has several places that can be pressed
to open up an extra explanation page. Currently they are not marked
especially (we find too much colour distracting, and bad for printing).
To see them, press \iflatexml here \else\SetDefaultColour{green}\ToggleOptionalColour{here}\SetDefaultColour{black} \fi
and they will appear in red. 

This does not work in all pdf readers --- we tested this in Acrobat
Reader and in Evince (a Linux pdf browser) and it works in them. Most
other browsers we tried simply ignore these, so you can still read
the paper, just will not have access to these extra explanations pages. 

If these explanation pages give you problems with your pdf reader,
you can get a version that does not have this feature on by the following
procedure. Go to the arXiv page of this paper. Click on `TeX Source'
on the menu on the right-hand side. You will get a zipped tar file
--- open it and extract the file `avoiding coupling no yellow boxes.pdf'. 

\subsection{Notation}

All random walks will be on $\mathbb{Z}^{4}$, and they all start
at time zero (i.e.\ when we say `$R$ is random walk starting from
$x$' we mean $R(0)=x$). For two times $t<s$, the notation $R[t,s]$
will mean the path $(R(t),\dotsc,R(s))$. When we apply set notation
to paths (in particular to random walks), we always implicitly imply
the trace of the path, i.e.\ the set of vertices visited by the path.
Thus $R_{1}\cap R_{2}$ is the set of vertices of $\mathbb{Z}^{4}$
visited by both $R_{1}$ and $R_{2}$, and $|R_{1}\cap R_{2}|$ is
its size. We often say about a random walk $R$ that it is `stopped'
on some stopping time (typically upon hitting some subset of $\mathbb{Z}^{4}$).
In this case the notation $R$ refers only to the set of vertices
visited up to that stopping time. We denote by $G(x,y)$ the probability
that random walk starting from $x$ ever visits $y$ (in particular,
$G(x,x)=1$). 

Other random walk preliminaries are in the appendix.

We denote by $B(n)$ the discrete ball around 0 of radius $n$, namely
$\{x:|x|<n\}$, where $|x|$ is the $\ell^{2}$ (Euclidean) norm.
If we want a ball around another point we will use $B(x,n)\coloneqq\{y:|x-y|<n\}$.
We will use $\partial$ for the inner vertex boundary, namely $\partial A=\{x\in A:\exists y\not\in A,x\sim y\}$.
The notation $x\sim y$ is the neighbourhood relation in our graph,
namely $|x-y|=1$. 

As usual, $C$ and $c$ will refer to arbitrary universal constants,
whose value may change from place to place (even in the same formula).
$C$ will refer to constants which are `large enough' and $c$ to
constants which are `small enough'. We use $x\approx y$ as a short
for $cx\le y\le Cx$. All logs are to basis $2$. 

\section{Proof}

We will not construct the coupling explicitly but rather use the marriage
lemma to show one exists. For this we will need some estimates that
say that with high probability, a 4-dimensional random walk is not
hittable by a second random walk. In a formula, that $\mathbb{P}(\mathbb{P}(R_{1}\cap R_{2}\ne\emptyset\,|\,R_{1})>1-\varepsilon)$
is small, when $R_{1}$ and $R_{2}$ are two independent 4-dimensional
random walks (see lemma \ref{lem:hard to be hittable} below for the
precise formulation). The proof is a version of Lawler's result that
$\mathbb{P}(R_{1}\cap R_{2}\ne\emptyset)$ is small. 
\begin{lem}
\label{lem:adding one vertex}Let $x_{1},\dotsc,x_{r}\in\mathbb{Z}^{4}$
and let $E$ and $E'$ be the events that random walk starting from
0 passes through all of $x_{1},\dotsc,x_{r-1}$ and through all of
$x_{1},\dotsc,x_{r}$, respectively. Then
\[
\mathbb{P}(E')\le C(r)\mathbb{P}(E)(\dist(x_{r},\{0,x_{1},\dotsc,x_{r-1}\})+1)^{-2}.
\]
 
\end{lem}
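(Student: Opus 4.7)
The plan is to expand both sides of the inequality as sums over first-visit orderings of the required points, and to compare them term by term.

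For a permutation $\sigma$ of $\{x_1,\dots,x_r\}$, let $V_\sigma$ denote the event that the walk visits these points with first-visit order $\sigma$; these events partition $E'$. Iterated strong Markov gives
\[
\mathbb{P}(V_\sigma)\le \prod_{i=0}^{r-1}G(y_i,y_{i+1}),
\]
where $y_0=0$ and $y_1,\dots,y_r$ enumerate the $x_j$'s in $\sigma$-order. Write each $\sigma$ as a pair $(\sigma',k)$, where $\sigma'$ is the ordering of $\{x_1,\dots,x_{r-1}\}$ obtained by deleting $x_r$ and $k\in\{1,\dots,r\}$ is the position of $x_r$. The product for $\sigma$ differs from the one for $\sigma'$ only through a single ``insertion factor'' $G(y_{k-1},x_r)G(x_r,y_{k+1})/G(y_{k-1},y_{k+1})$, with the obvious one-sided variant when $k=1$ or $k=r$.

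In $\mathbb{Z}^4$, $G(x,y)\asymp(|x-y|+1)^{-2}$. Setting $a=|y_{k-1}-x_r|+1$ and $b=|x_r-y_{k+1}|+1$, the triangle inequality gives $(a+b)^2/(ab)^2\le 4/\min(a,b)^2$; and since every $y\in\{0,x_1,\dots,x_{r-1}\}$ satisfies $|y-x_r|\ge d_r\coloneqq\dist(x_r,\{0,x_1,\dots,x_{r-1}\})$, the insertion factor is at most $C/(d_r+1)^2$. Summing over the $r$ possible positions of $x_r$,
\[
\mathbb{P}(E')\le \frac{Cr}{(d_r+1)^2}\sum_{\sigma'}\prod_{i=0}^{r-2}G(y_i',y_{i+1}'),
\]
with the sum ranging over permutations of $\{x_1,\dots,x_{r-1}\}$.

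The remaining step, and the main obstacle, is to show that $\sum_{\sigma'}\prod G_{\sigma'}\le C(r)\mathbb{P}(E)$. The upper bound $\mathbb{P}(E)\le\sum_{\sigma'}\prod G_{\sigma'}$ is immediate from the same strong Markov argument; the matching reverse inequality---that the first-visit ordering upper bound for visiting a finite set of points is sharp up to a constant depending only on the number of points---is a classical two-sided sharpness result for transient random walks. I would prove it by induction on $r$: for the nearest-neighbour ordering $\sigma^*$ of $\{x_1,\dots,x_{r-1}\}$, inclusion-exclusion on the single-step ``hit first'' probabilities gives $\mathbb{P}(V_{\sigma^*})\ge c(r)\prod G_{\sigma^*}$ with the correction terms controlled by the inductive hypothesis applied to subsets of $\{x_1,\dots,x_{r-1}\}$; and then $\sum_{\sigma'}\prod G_{\sigma'}\le (r-1)!\prod G_{\sigma^*}\le C(r)\mathbb{P}(V_{\sigma^*})\le C(r)\mathbb{P}(E)$. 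Combining with the display above finishes the proof.
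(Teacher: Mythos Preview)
Your decomposition over orderings and your insertion-factor bound are exactly what the paper does, and they are correct. But you have missed a one-line observation that makes your ``main obstacle'' disappear entirely. For any ordering $\sigma'$ of $\{x_1,\dots,x_{r-1}\}$, the product $\prod_i G(y'_{i-1},y'_i)$ is not merely an upper bound for $\mathbb{P}(V_{\sigma'})$: it is \emph{exactly} the probability of the event ``there exist times $0=t_0<t_1<\dots<t_{r-1}$ with $R(t_i)=y'_i$'' (iterated strong Markov at the successive hitting times, without insisting that these be first visits). That event is a subset of $E$, so
\[
\prod_i G(y'_{i-1},y'_i)\le \mathbb{P}(E)\qquad\text{for every }\sigma',
\]
and hence $\sum_{\sigma'}\prod G_{\sigma'}\le (r-1)!\,\mathbb{P}(E)$ trivially. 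This is precisely the paper's final step (the paper keeps the sum indexed by $\mathfrak{S}_r$, so the combinatorial factor comes out as $r!$).

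Your proposed detour through a lower bound $\mathbb{P}(V_{\sigma^*})\ge c(r)\prod G_{\sigma^*}$ is therefore unnecessary, and it is also the weakest part of your write-up. The inclusion--exclusion sketch is not fleshed out, and the inequality $\sum_{\sigma'}\prod G_{\sigma'}\le (r-1)!\prod G_{\sigma^*}$ would need the nearest-neighbour ordering to exactly maximise the product over all orderings, which you have not justified (greedy heuristics are not generally optimal for path-type problems). None of this matters once you notice that each individual product already sits below $\mathbb{P}(E)$.
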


\begin{proof}
Let $\sigma\in\sym_{r}$ i.e.\ a permutation of $\{1,\dotsc,r\}$.
Recall that $G(x,y)=\mathbb{P}^{x}(\exists t:R(t)=y)$ and denote
$x_{0}=0$, $\sigma(0)=0$, $\sigma(r+1)=\sigma(r-1)$, $k=\sigma^{-1}(r)-1$
and $d=\dist(x_{r},\{0,x_{1},\dotsc,x_{r-1}\})+1$ for brevity. Then
\begin{align*}
\mathbb{P}(E') & \le\sum_{\sigma\in\sym_{r}}\prod_{i=1}^{r}G(x_{\sigma(i-1)},x_{\sigma(i)})\\
 & \stackrel{(*)}{\le}Cd^{-2}\sum_{\sigma\in\sym_{r}}\Big(\prod_{i=1}^{k}G(x_{\sigma(i-1)},x_{\sigma(i)})\Big)G(x_{\sigma(k)},x_{\sigma(k+2)})\\
 & \qquad\qquad\Big(\prod_{i=k+3}^{r}G(x_{\sigma(i-1)},x_{\sigma(i)})\Big)\\
 & \le Cd^{-2}r!\mathbb{P}(E).
\end{align*}
To see $(*)$ assume first that $k\le r-2$ and that $|x_{r}-x_{\sigma(k)}|\le|x_{r}-x_{\sigma(k+2)}|$.
\HNOC{10cm}{In this case}{Write 
\[
x_{\sigma(k+2)}-x_{\sigma(k)}=x_{\sigma(k+2)}-x_{r}+x_{r}-x_{\sigma(k)}.
\]
Apply the triangle inequality and then the assumption $|x_{r}-x_{\sigma(k)}|\le|x_{r}-x_{\sigma(k+2)}|$
and get
\[
|x_{\sigma(k+2)}-x_{\sigma(k)}|\le|x_{\sigma(k+2)}-x_{r}|+|x_{r}-x_{\sigma(k)}|\le2|x_{r}-x_{\sigma(k+2)}|.
\]
Divide by 2 and get $|x_{r}-x_{\sigma(k+2)}|\ge\frac{1}{2}|x_{\sigma(k)}-x_{\sigma(k+2)}|$,
as claimed} $|x_{r}-x_{\sigma(k+2)}|\ge\frac{1}{2}|x_{\sigma(k)}-x_{\sigma(k+2)}|$
and hence by lemma \ref{lem:heat kernel}
\[
G(x_{r},x_{\sigma(k+2)})\le CG(x_{\sigma(k)},x_{\sigma(k+2)}).
\]
Together with $G(x_{\sigma(k)},x_{r})\le Cd^{-2}$, $(*)$ is proved
in this case. The case that $|x_{r}-x_{\sigma(k)}|>|x_{r}-x_{\sigma(k+2)}|$
is identical, with the roles of $\sigma(k)$ and $\sigma(k+2)$ interchanged.
The case that $k=r-1$ is simpler. The lemma is thus proved.
\end{proof}
\begin{lem}
\label{lem:the moments that killed me}Let $R_{0},R_{1},\dotsc,R_{k}$
be independent random walks in $\mathbb{Z}^{4}$ starting from $\partial B(n)$
and stopped when hitting $\partial B(m)$, for some $m>n$. Assume
further that their starting points satisfy $|R_{0}(0)-R_{i}(0)|>\frac{n}{\log n}$.
Then
\[
\mathbb{E}\bigg(\sum_{i=1}^{k}|R_{0}\cap R_{i}|\bigg)^{r}\le C(r)k^{3/2}\log\Big(\frac{m\log n}{n}\Big)\Big(\log m+k^{2}\log\Big(\frac{m\log n}{n}\Big)\Big)^{r-1}
\]
where $R_{0}\cap R_{i}$ denotes the intersection of the traces of
the two walks.
\end{lem}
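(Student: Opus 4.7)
The plan is to estimate the moment by a standard combinatorial expansion. Writing $X_i=|R_0\cap R_i|=\sum_x\mathbf{1}_{x\in R_0}\mathbf{1}_{x\in R_i}$, using independence of $R_0,R_1,\dotsc,R_k$, and expanding, one has
\[
\mathbb{E}\bigg(\sum_{i=1}^{k}X_i\bigg)^r=\sum_{(i_1,\dotsc,i_r)\in\{1,\dotsc,k\}^r}\sum_{(x_1,\dotsc,x_r)}\mathbb{P}(\{x_1,\dotsc,x_r\}\subset R_0)\prod_i\mathbb{P}\bigl(\{x_j:i_j=i\}\subset R_i\bigr),
\]
the product being over the distinct values of $i_1,\dotsc,i_r$. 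Every visiting probability $\mathbb{P}(\{y_1,\dotsc,y_s\}\subset R)$ is then estimated by iterating lemma \ref{lem:adding one vertex} to obtain a Lawler-style bound of the form $C(s)\sum_{\sigma\in\sym_s}\prod_\ell G(y_{\sigma(\ell-1)},y_{\sigma(\ell)})$, with $y_{\sigma(0)}$ the starting point of $R$. The problem is thereby reduced to estimating purely deterministic sums of products of Green's functions.

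The analysis of these sums rests on the 4D estimate $G(x,y)\le C/(|x-y|^2+1)$. In particular, since $|R_0(0)-R_i(0)|>n/\log n$, one has $\sum_{x\in B(m)}G(R_0(0),x)G(R_i(0),x)\le CL$ where $L\coloneqq\log(m\log n/n)$, and $\sum_{y\in B(m)}G(x,y)G(y,z)\le C\log m$ uniformly in $x,z$. Each term in the expansion is then naturally pictured as a multigraph on $\{R_0(0),R_{i_1}(0),\dotsc,R_{i_r}(0),x_1,\dotsc,x_r\}$ with one edge per Green's function, and summing out the $x_j$ one at a time in the order prescribed by the permutations collapses the graph vertex by vertex; each collapse contributes either a factor of $L$ (from a ``good'' connection to starting points at the required separation) or of $\log m$ (from a ``bad'' connection). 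Grouping by the number $s\in\{1,\dotsc,\min(r,k)\}$ of distinct walks appearing in $(i_1,\dotsc,i_r)$ produces a combinatorial factor of at most $k^s$ from the choice of walks.

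The main obstacle is combinatorial: the permutation sums produce a large zoo of diagrams whose contributions must be combined to fit under the target polynomial $k^{3/2}L(\log m+k^2 L)^{r-1}$, which is not multiplicative in $r$. Different diagrams contribute different powers of $k$, $L$, and $\log m$, and the resulting bookkeeping is the most delicate part of the argument; the unusual fractional $k^{3/2}$ factor in particular suggests that at some point one must trade diagonal against off-diagonal contributions via a Cauchy--Schwarz-type step rather than estimate each diagram in isolation. A secondary technical point is to use the hypothesis $|R_0(0)-R_i(0)|>n/\log n$ throughout to ensure that every Green's function sum encountered satisfies the sharper bound $L$ wherever possible, so the crude uniform bound $\log m$ is only needed for the minimum number of factors permitted by the diagram's topology.
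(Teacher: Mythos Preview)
Your expansion and the reduction to Green's function sums is exactly how the paper begins, but from there the proposal has a real gap. The multigraph you describe is not a tree: every point $x_j$ lies on the $R_0$ path (degree $2$ unless it is the last point) \emph{and} on at least one $R_{i}$ path, so a typical $x_j$ has degree $\ge 3$. Summing out such a vertex does not reduce to a single convolution $\sum_y G(x,y)G(y,z)\le C\log m$; you face sums like $\sum_x G(a,x)G(b,x)G(c,x)$, and iterating these does not produce factors neatly classifiable as ``$L$'' or ``$\log m$''. The paper's device for escaping this is to organise the points $x_1,\dotsc,x_r$ by their \emph{minimal spanning tree} $T$ (rooted at $0$): one proves by induction on $r$, peeling off a leaf $v$ of $T$ at each step via lemma~\ref{lem:adding one vertex}, that $\sum_{i_1,\dotsc,i_r}\mathbb{P}(x_u\in R_0\cap R_{i_u}\ \forall u)$ is bounded by a product over tree edges of terms $d_e^{-4}+kd_e^{-2}s_e^{-2}$. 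This tree structure is what makes the subsequent spatial sum tractable, again by a leaf-peeling induction.

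Your guess about the origin of the $k^{3/2}$ is also off. It does not come from Cauchy--Schwarz but from a counting argument already in the base case $r=1$: with $d_f\in[2^\beta,2^{\beta+1})$ and $s_f\in[2^\alpha,2^{\alpha+1})$, the number of admissible $x$ is at most $C\min\{k2^{4\alpha},2^{4\beta}\}$ (it must lie in one of $k$ balls of radius $\sim 2^\alpha$ and in one ball of radius $\sim 2^\beta$), and summing $4^{-\alpha-\beta}\min\{k2^{4\alpha},2^{4\beta}\}$ is dominated by the scale where the two options balance, giving $\sqrt{k}$; multiplying by the explicit $k$ in the tree bound yields $k^{3/2}$. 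The $k^2$ in the higher-order factor arises the same way in the induction step. Without the spanning-tree organisation and this dyadic counting, I do not see how to push your diagram-by-diagram scheme through to the stated bound.
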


We remark that both the $k^{3/2}$ and the $k^{2}$ can probably be
improved to $k$, but proving this weaker estimate is a little easier.
\begin{proof}
Throughout the proof, when we write $C$ or $c$ we mean a constant
which might depend on $r$, and we will not denote it by $C(r)$ the
way we did in the statement of the lemma. 

We first write
\begin{align}
\mathbb{E}\bigg(\sum_{i=1}^{k}|R_{0}\cap R_{i}|\bigg)^{r} & =\sum_{i_{1},\dotsc,i_{r}}\mathbb{E}\prod_{j=1}^{r}|R_{0}\cap R_{i_{j}}|\nonumber \\
 & =\sum_{i_{1},\dotsc,i_{r}}\sum_{x_{1},\dotsc,x_{r}}\mathbb{P}(x_{j}\in R_{0}\cap R_{i_{j}}\forall j)\label{eq:sum xi}
\end{align}
where the last sum is over all tuples of $x_{j}\in B(m)$. We fix
one such tuple $x_{j}$ and consider it.

Let $T$ be the minimal spanning tree over the $x_{j}$ and over $0$
i.e.\ $T$ is a subset of $\{0,\dotsc,r\}^{2}$ such that the graph
whose vertex set is $\{0,\dotsc,r\}$ and whose edge set is $T$ is
a spanning tree, and such that $\sum_{\{v,w\}\in T}|x_{v}-x_{w}|$
is minimised among all possible choices of $T$, where we denote $x_{0}\coloneqq0$.
If more than one choice exists, choose between them arbitrarily. For
any edge $\{v,w\}\in T$ either the path in $T$ from $0$ to $v$
passes through $w$ (in which case we say that $v$ is \emph{further})
or not (in which case we say that $w$ is further). For a $\{v,w\}\in T$
such that $v$ is further we denote 
\[
d_{\{v,w\}}\coloneqq|x_{v}-x_{w}|+1\qquad s_{\{v,w\}}\coloneqq\min_{i\in\{1,\dotsc,k\}}|x_{v}-R_{i}(0)|+1,
\]
i.e.\ $s_{\{v,w\}}$ is the distance of $x_{v}$ from the starting
points of $R_{1},\dotsc,R_{k}$ ($s_{\{v,w\}}$ depends also on the
tree $T$, as $T$ determines which of $v$ and $w$ is further, but
we omit this fact from the notation for brevity). Let $f$ be an edge
of $T$ containing 0 (if more than one exists, choose one arbitrarily).
Our first claim is that
\begin{equation}
\sum_{i_{1},\dotsc,i_{r}}\mathbb{P}(x_{u}\in R_{0}\cap R_{i_{u}}\forall u)\le Ckd_{f}^{-2}s_{f}^{-2}\prod_{e\in T\setminus\{f\}}d_{e}^{-4}+kd_{e}^{-2}s_{e}^{-2}.\label{eq:prod over tree}
\end{equation}
We will prove (\ref{eq:prod over tree}) by induction over $r$.

For the induction base we note that $\mathbb{P}(x\in R_{0}\cap R_{i})\le C|x|^{-2}|x-R_{i}(0)|^{-2}$,
by lemma \ref{lem:heat kernel}. Summing over $i$ gives (\ref{eq:prod over tree})
in the case $r=1$. For the induction step we \HNOC{10cm}{note}{Let
$T$ be an arbitrary finite tree and let $E$ be the number of edges
of $T$ and $V$ the number of vertices. Then $V=E+1$, which can
be proved by an induction on the number of vertices (we skip the details
of this proof, which is easy). `Non-trivial' here means that it is
not a single vertex, and then all degrees of all vertices are at least
1. Denote by $L$ the number of leaves, i.e.\ vertices with degree
1. Then the other vertices have degree at least 2 and the sum of degrees
(over all vertices) is at least $2(V-L)+L=2V-L$. Since the sum of
degrees is exactly $2E$ we get
\[
2V-L\le2E=2V-2
\]
or $L\ge2$.

As for the claim that a tree containing exactly 2 leaves is a path,
this follows from the same argument, but noting that $L=2$ requires
that the sum of degrees be equal to $2V-L$, which, in turn, requires
all degrees different from 1 to be 2.} that every non-trivial tree
contains at least two leaves and that a tree containing exactly two
leaves is a path. Let therefore $v\in\{1,\dotsc,r\}$ be a leaf of
$T$ different from 0 and such that $f\ne\{0,v\}$. Since $v$ is
a leaf there is exactly one $w$ such that $\{v,w\}\in T$ and $v$
is the further vertex. \HNOC{10cm}{Since}{Suppose there was some
$u\not\in\{v,w\}$ such that $|x_{u}-x_{v}|<|x_{w}-x_{v}|$. Then
we could construct a second graph $T'$ by taking $T$, removing the
edge $\{w,v\}$ and adding the edge $\{u,v\}$. The graph $T'$ would
still be connected (because we just disconnected and reconnected the
leaf $v$) and would still be a tree because any hypothetical cycle
in $T'$ cannot contain $v$ (because it has degree 1) but must contain
$v$ (because otherwise it would exist already in $T$). Hence $T'$
is a tree. Further, 
\[
\sum_{\{a,b\}\in T'}|x_{a}-x_{b}|-\sum_{\{a,b\}\in T}|x_{a}-x_{b}|=|x_{u}-x_{v}|-|x_{w}-x_{v}|<0,
\]
contradicting that $T$ is minimal with respect to this sum. Since
we reached a contradiction, we conclude that $|x_{u}-x_{v}|\ge|x_{w}-x_{v}|$
for all $u\not\in\{v,w\}$. } $T$ is a minimal spanning tree we
have
\begin{equation}
d_{\{v,w\}}=|x_{v}-w_{w}|+1\le|x_{v}-x_{u}|+1\quad\forall u\in\{0,\dotsc,r\}.\label{eq:dvw minimal}
\end{equation}
Use the induction hypothesis for $x_{1},\dotsc,\cancel{x_{v}},\dotsc,x_{r}$.
The tree $T\setminus\{\{v,w\}\}$ is a minimal spanning tree for $x_{1},\dotsc,\cancel{x_{v}},\dotsc,x_{r}$
and $f$ is an edge of $T\setminus\{\{v,w\}\}$ containing 0. In addition,
the notion of `further' in $T\setminus\{\{v,w\}\}$ is identical to
that notion in $T$, restricted. Hence the induction hypothesis says
that 
\[
\sum_{i_{1},\dotsc,\cancel{i_{v}},\dotsc i_{r}}\mathbb{P}(x_{u}\in R_{0}\cap R_{i_{u}}\forall u\ne v)\le Ckd_{f}^{-2}s_{f}^{-2}\prod_{e\in T\setminus\{\{v,w\},f\}}d_{e}^{-4}+kd_{e}^{-2}s_{e}^{-2}.
\]
By lemma \ref{lem:adding one vertex}, \HNOC{12.5cm}{the effect}{Recall
the statement of lemma \ref{lem:adding one vertex}:

\emph{Let $x_{1},\dotsc,x_{r}\in\mathbb{Z}^{4}$ and let $E$ and
$E'$ be the events that random walk starting from 0 passes through
all of $x_{1},\dotsc,x_{r-1}$ and through all of $x_{1},\dotsc,x_{r}$,
respectively. Then
\[
\mathbb{P}(E')\le C(r)\mathbb{P}(E)(\dist(x_{r},\{0,x_{1},\dotsc,x_{r-1}\})+1)^{-2}.
\]
}

\medskip 

The lemma is used as follows. Consider
\[
\sum_{i_{1},\dotsc,\cancel{i_{v}},\dotsc i_{r}}\mathbb{P}(\{x_{u}\in R_{0}\;\forall u\}\cap\{x_{u}\in R_{i_{u}}\;\forall u\ne v\}),
\]
i.e.\ the sum is still without $i_{v}$ and the event has changed
only for $R_{0}$, namely we require that all $x_{u}$ are in $R_{0}$,
including $x_{v}$, but do not make the requirement that $x_{v}\in R_{i_{v}}$.
Then lemma \ref{lem:adding one vertex}, applied to each summand individually
and only to $R_{0}$ gives
\[
\mathbb{P}(x_{u}\in R_{0}\;\forall u)\le C\mathbb{P}(x_{u}\in R_{0}\;\forall u\ne v)\cdot(\min_{u\ne v}|x_{v}-x_{u}|+1)^{-2}
\]
(as usual, $C$ is allowed to depend on $r$). Since $R_{0}$ and
$(R_{1},\dotsc,R_{k})$ are independent we have
\[
\mathbb{P}(\{x_{u}\in R_{0}\;\forall u\}\cap\{x_{u}\in R_{i_{u}}\;\forall u\ne v\})=\mathbb{P}(x_{u}\in R_{0}\;\forall u)\mathbb{P}(x_{u}\in R_{i_{u}}\;\forall u\ne v),
\]
and similarly if we do not require $x_{v}\in R_{0}$. Thus, 
\begin{multline*}
\mathbb{P}(\{x_{u}\in R_{0}\;\forall u\}\cap\{x_{u}\in R_{i_{u}}\;\forall u\ne v\})\le\\
C\mathbb{P}(x_{u}\in R_{0}\cap R_{i_{u}}\;\forall u\ne v)\cdot(\min_{u\ne v}|x_{v}-x_{u}|+1)^{-2}
\end{multline*}
Summing this over $i_{1},\dotsc,\linebreak[0]\cancel{i_{v}},\linebreak[0]\dotsc i_{r}$
gives
\begin{multline*}
\sum_{i_{1},\dotsc,\cancel{i_{v}},\dotsc i_{r}}\mathbb{P}(\{x_{u}\in R_{0}\;\forall u\}\cap\{x_{u}\in R_{i_{u}}\;\forall u\ne v\})\\
\le C(\min_{u\ne v}|x_{v}-x_{u}|+1)^{-2}\sum_{i_{1},\dotsc,\cancel{i_{v}},\dotsc i_{r}}\mathbb{P}(x_{u}\in R_{0}\cap R_{i_{u}}\forall u\ne v).
\end{multline*}
} of adding the condition $x_{v}\in R_{0}$ is bounded by multiplication
by $C(\min_{u}|x_{v}-x_{u}|+1)^{-2}$ which, by (\ref{eq:dvw minimal}),
is $Cd_{\{v,w\}}^{-2}$. We next consider the effect of adding the
summation over $i_{v}$ and the condition $x_{v}\in R_{i_{v}}$. We
need to consider two cases. The first is that $i_{v}$ is identical
to $i_{u}$ for some other $u\ne v$. In this case $i_{v}$ has only
$r$ possibilities. Denote by $U\subseteq\{1,\dotsc,r\}$ the set
$\{u:i_{v}=i_{u}\}$. Again, using lemma \ref{lem:adding one vertex}
gives that the effect of adding the condition $x_{v}\in R_{i_{j}}$
is bounded by 
\[
C\Big(\min\Big\{\min_{u\in U}|x_{v}-x_{u}|,|x_{v}-R_{i_{v}}(0)|\Big\}+1\Big)^{-2}\le C\min\{d_{\{v,w\}},s_{\{v,w\}}\}^{-2}.
\]
The second case is that $i_{v}$ is different from all existing $i_{u}$.
In this case the number of possibilities for $i_{u}$ is $k-r$ (which
could be much larger than $r$) but the effect of adding the condition
$x_{v}\in R_{i_{v}}$ is multiplying by a term $\approx|x_{v}-R_{i_{v}}(0)|^{-2}\le Cs_{\{v,w\}}^{-2}$.
All in all we get
\begin{align*}
\sum_{i_{1},\dotsc,i_{r}}\mathbb{P}(x_{u} & \in R_{0}\cap R_{i_{u}}\forall u)\le Ckd_{f}^{-2}s_{f}^{-2}\Big(\prod_{e\in T\setminus\{\{v,w\},f\}}d_{e}^{-4}+kd_{e}^{-2}s_{e}^{-2}\Big)\cdot\\
 & \qquad Cd_{\{v,w\}}^{-2}\cdot\Big(Cr\min\{d_{\{v,w\}},s_{\{v,w\}}\}^{-2}+C(k-r)s_{\{v,w\}}^{-2}\Big)\\
 & \le Ckd_{f}^{-2}s_{f}^{-2}\prod_{e\in T\setminus\{f\}}d_{e}^{-4}+kd_{e}^{-2}s_{e}^{-2},
\end{align*}
completing the induction and proving (\ref{eq:prod over tree}).

We now wish to sum (\ref{eq:prod over tree}) over all $x_{1},\dotsc,x_{r}$.
Fix one tree $T$ on $\{0,\dotsc,r\}$ and one edge $f$ of $T$ with
$0\in f$. Let $\mathcal{X}_{T}$ be the set of all tuples $(x_{1},\dotsc,x_{r})$
such that $T$ is their minimal spanning tree. Further, for $\alpha\in\{0,\dotsc,\lfloor\log_{2}m\rfloor\}$
let $\mathcal{X}_{T,u}$ be the set of all $(x_{1},\dotsc,x_{r})\in\mathcal{X}_{T}$
such that 
\[
\min_{e\in T}s_{e}\in\big[2^{\alpha},2^{\alpha+1}\Big).
\]
We now claim that, for any $T$, and $f$ and any $u$, 
\begin{multline}
\sum_{x\in\mathcal{X}_{T,\alpha}}kd_{f}^{-2}s_{f}^{-2}\prod_{e\in T\setminus\{f\}}d_{e}^{-4}+kd_{e}^{-2}s_{e}^{-2}\\
\le C\min\Big\{ k^{3/2},k^{2}\frac{4^{\alpha}\log^{2}n}{n^{2}}\Big\}\Big(\log m+k^{2}\log\Big(\frac{m}{2^{\alpha}}\Big)\Big)^{r-1}.\label{eq:sum over X}
\end{multline}
Again we prove (\ref{eq:sum over X}) by induction over $r$. The
case $r=1$ follows by noting that if $s_{f}\in[2^{\alpha},2^{\alpha+1})$
and $d_{f}\in[2^{\beta},2^{\beta+1})$ and then necessarily $\max\{\alpha,\beta\}\ge\log_{2}n/4\log n$
by our restriction on the starting points of the $R_{i}$. On the
other hand the number of $x$ satisfying these conditions is $\le C\min\{k2^{4\alpha},2^{4\beta}\}$.
We get
\begin{align}
\sum_{x\in\mathcal{X}_{\{\{0,1\}\},\alpha}}d_{f}^{-2}s_{f}^{-2} & \le C\quad\sum_{\mathclap{{\substack{\beta\in\{0,\dotsc,\log m\}\\
\max\{\alpha,\beta\}\ge\log(n/4\log n)
}
}}}\quad4^{-\alpha-\beta}\min\{k2^{4\alpha},2^{4\beta}\}\nonumber \\
 & \le C\min\Big\{\sqrt{k},k\frac{4^{\alpha}\log^{2}n}{n^{2}}\Big\}.\label{eq:sum dfsf}
\end{align}
(The second inequality follows by noting that the terms increase exponentially
in $\beta$ until there is an approximate equality in the minimum,
i.e.\ until $k2^{4\alpha}\approx2^{4\beta}$, and then decrease exponentially,
so that the sum is dominated by one term. When $\alpha<\log(n/4\log n)$
we get that the sum starts from $\beta=\lceil\log n/4\log n\rceil$,
but the sum is still dominated by a single term. The only difference
is that if, in addition, $2^{\alpha}<k^{-1/4}n/4\log n$ then the
maximal term is the one where $\beta=\lceil\log n/4\log n\rceil$,
but it does not satisfy $k2^{4\alpha}\approx2^{4\beta}$.) This shows
the case $r=1$ of (\ref{eq:sum over X}).

For the induction step we again choose a leaf $v\ne0$ of $T$ such
that $f\ne\{0,v\}$ and let $\{v,w\}\in T$. Fix $x_{1},\dotsc,\cancel{x_{v}},\dotsc,x_{r}$
(in particular fix $x_{w}$, so that $d_{\{v,w\}}$ is a function
of $x_{v}$ only). Define $\alpha'$ to be the corresponding quantity
for $T\setminus\{\{v,w\}\}$, i.e.\ by 
\[
\min_{e\in T\setminus\{\{v,w\}\}}s_{e}\in\big[2^{\alpha'},2^{\alpha'+1}\Big),
\]
so that $\alpha'\ge\alpha$. Consider first the case that $\alpha=\alpha'$
or $\alpha=\alpha'-1$. In this case we estimate 
\[
\sum_{x_{v}\in B(m)}d_{\{v,w\}}^{-4}\le C\log(m)
\]
because for each $l\in\{0,\dotsc,\lfloor\log_{2}m\rfloor\}$ there
are $\le C2^{4l}$ possibilities for $x_{v}$ such that $d_{\{v,w\}}\in[2^{l},2^{l+1}]$.
For the sum over $d_{\{v,w\}}^{-2}s_{\{v,w\}}^{-2}$ we perform a
calculation similar to (\ref{eq:sum dfsf}). Define $\beta$ and
$\gamma$ using $d_{\{v,w\}}\in[2^{\beta},2^{\beta+1})$ and $s_{\{v,w\}}\in[2^{\gamma},2^{\gamma+1})$,
and get 
\[
\qquad\sum_{\mathclap{x_{v}:x\in\mathcal{X}_{T,\alpha}}}d_{\{v,w\}}^{-2}s_{\{v,w\}}^{-2}\le C\sum_{\beta=0}^{\log m}\sum_{\gamma=\alpha}^{\log m}4^{-\beta-\gamma}\min\{2^{4\beta},k2^{4\gamma}\}\le C\sqrt{k}\log(m/2^{\alpha}).
\]
(Here and below $x=(x_{1},\dotsc,x_{r})$). Thus (\ref{eq:sum over X})
is proved in this case. 

The case that $\alpha<\alpha'-1$ is similar. We first note that this
means that $s_{\{v,w\}}\in[2^{\alpha},2^{\alpha+1})$ i.e.\ $v$
is close to the starting points. Thus it must be far from $w$, whose
distance to the starting points is at least $2^{\alpha'}$. Thus $\beta\ge\alpha'-1$
and the number of possibilities for $x_{v}$ is $\le Ck2^{4\alpha}$.
This makes both sums smaller. Indeed,
\[
\sum_{x_{v}:x\in\mathcal{X}_{T,\alpha}}d_{\{v,w\}}^{-4}\le\sum_{\beta=\alpha'}^{\log_{2}m}2^{-4\beta}\cdot Ck2^{4\alpha}\le Ck2^{4(\alpha-\alpha'}).
\]
and
\[
\sum_{x_{v}:x\in\mathcal{X}_{T,\alpha}}d_{\{v,w\}}^{-2}s_{\{v,w\}}^{-2}\le\sum_{\beta=\alpha'-1}^{\infty}Ck2^{4\alpha}\cdot4^{-\beta-\alpha}\le Ck4^{\alpha-\alpha'}
\]
Taking the term $4^{\alpha-\alpha'}$ out of the sum we again get
(\ref{eq:sum over X}) for $r$. This shows that (\ref{eq:sum over X})
is preserved when moving from $r$ to $r+1$, and hence holds for
all $r$.

Summing (\ref{eq:sum over X}) over $\alpha$ gives
\begin{align*}
\lefteqn{{\sum_{x\in\mathcal{X}_{T}}kd_{f}^{-2}s_{f}^{-2}\prod_{e\in T\setminus\{f\}}d_{e}^{-4}+kd_{e}^{-2}s_{e}^{-2}}}\qquad\qquad\\
 & \le\sum_{\alpha=0}^{\log m}C\min\Big\{ k^{3/2},k^{2}\frac{4^{\alpha}\log^{2}n}{n^{2}}\Big\}\Big(\log m+k^{2}\log\Big(\frac{m}{2^{\alpha}}\Big)\Big)^{r-1}.\\
 & \le Ck^{3/2}\log\Big(\frac{m\log n}{n}\Big)\Big(\log m+k^{2}\log\Big(\frac{m\log n}{n}\Big)\Big)^{r-1}.
\end{align*}
The lemma follows by summing over all trees and all possibilities
for the edge $f$.
\end{proof}
\begin{lem}
\label{lem:sraksrak}Let $R$ be random walk on $\mathbb{Z}^{4}$
starting from $0$ and let $n>0$. Then 
\[
\mathbb{P}\Big(\sum_{t=1}^{n}(|R(t)|+1)^{-2}>K\log n\Big)\le Cn^{-2}
\]
if only $K$ is sufficiently large.
\end{lem}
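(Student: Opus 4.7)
The plan is to apply the moment method. Set $X_t := (|R(t)|+1)^{-2}$ and $S := \sum_{t=1}^n X_t$. The heart of the argument is to show that, for some universal constant $C$ and every integer $r \ge 1$,
\[
\mathbb E[S^r] \le (C \log n)^{r}.
\]
Given this, Markov's inequality gives $\mathbb P(S > K\log n) \le (C/K)^r$, and the choice $r := \lceil 2 \log n/\log(K/C)\rceil$ yields the desired $\le n^{-2}$ whenever $K$ exceeds $eC$. Note the crucial point here: the bound must be $(C\log n)^r$ with no extra factorial $r!$, for otherwise Markov only gives exponential decay in $K$ (which is useless for constant $K$).

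To prove the moment bound I would expand
\[
\mathbb E[S^r] = r!\!\!\sum_{1\le t_1<\cdots<t_r\le n}\!\!\mathbb E\Big[\prod_{j=1}^r X_{t_j}\Big] + (\text{diagonal corrections}),
\]
and establish the pointwise bound
\[
\mathbb E\Big[\prod_{j=1}^r X_{t_j}\Big] \le \frac{C^r}{\prod_{j=1}^r t_j}
\]
by iterating the Markov property together with the four-dimensional heat kernel estimate (lemma \ref{lem:heat kernel})
\[
\mathbb E\big[X_{t_j}\,\big|\,\mathcal F_{t_{j-1}}\big] \le \frac{C}{\max(|R(t_{j-1})|^2,\,t_j-t_{j-1})+1}.
\]
Since typically $|R(t_{j-1})|^2 \asymp t_{j-1}$, the maximum on the right is of order $t_j$. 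Summing the resulting product over ordered tuples gives $\sum_{t_1<\cdots<t_r\le n}\prod 1/t_j \le (H_n)^r/r!$, and the prefactor $r!$ cancels, producing $\mathbb E[S^r] \le (C\log n)^r$ as claimed.

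The main obstacle is turning the heuristic ``typically $|R(t_{j-1})|^2 \asymp t_{j-1}$'' into a rigorous pointwise estimate, particularly when consecutive times are close in log-scale: there the naive bound $C/(t_j-t_{j-1})$ is far from sharp, and carries a logarithmic loss that must not accumulate to spoil the factorial cancellation. The cleanest route, in the spirit of the proof of lemma \ref{lem:the moments that killed me}, is to pass to the Green-function formulation
\[
\sum_{t_1<\cdots<t_r\le n}\mathbb E\Big[\prod X_{t_j}\Big] \le \sum_{x_1,\ldots,x_r}\prod_{j=1}^r G(x_{j-1},x_j)\,(|x_j|+1)^{-2},
\]
with $x_0 = 0$ and with the spatial truncation $|x_j|\lesssim\sqrt n$ coming effectively from the time budget $\sum_j s_j \le n$. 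The resulting sum is estimated by a dyadic decomposition of the positions $x_j$, carefully tracking where each $x_j$ sits relative to $\sqrt n$. Any polylogarithmic slack that survives the dyadic bookkeeping is absorbed into $C$, since increasing $C$ only requires $K$ to be a slightly larger constant.
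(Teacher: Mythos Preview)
Your approach is genuinely different from the paper's, and the gap is exactly where you place it. The pointwise bound $\mathbb{E}[\prod_j X_{t_j}]\le C^r/\prod_j t_j$ is not merely hard to prove but false: already for $r=2$ and $t_2=t_1+1$ one has $\mathbb{E}[X_{t_1}X_{t_2}]\approx\mathbb{E}[(|R(t_1)|+1)^{-4}]\approx t_1^{-2}\log t_1\gg C^2/(t_1 t_2)$. Your Green-function rescue is only a sketch, and both missing ingredients are real work: the passage from the time budget $\sum_j s_j\le n$ to a spatial cutoff $|x_j|\lesssim\sqrt n$ is not automatic (the constraint is on the \emph{sum} of the increments, and simply dropping it makes the spatial sum diverge), and the survival of the $1/r!$ through a dyadic decomposition is precisely what needs proof, not something to be ``absorbed into $C$''. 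More seriously, the uniform bound $\mathbb{E}[S^r]\le(C\log n)^r$ cannot hold for all $r$: the event that the walk stays in $B(\sqrt{n/u})$ for all of $[1,n]$ has probability $\ge e^{-cu}$ and forces $S\ge cu$, so $\mathbb{E}[S^r]\ge\sup_u (cu)^r e^{-cu}\approx(cr)^r$. Your Markov step uses $r$ of order $\log n$, right at the threshold where this lower bound meets your claimed upper bound, so any valid argument must track the $r$-dependence explicitly rather than wave at polylogarithmic slack.

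The paper sidesteps moments entirely. It introduces stopping times $\tau_i$ at which $|R|$ roughly doubles or halves relative to $|R(\tau_i)|$, with a floor $M$ and a time cap $\lambda|R(\tau_i)|^2$ per interval. By lemma~\ref{lem:exit annulus} the process $\log_2|R(\tau_i)|$ dominates a one-dimensional random walk on $\mathbb{Z}$ with strictly positive drift, so with probability $\ge 1-Cn^{-2}$ there are at most $C\log n$ such intervals before time $n$. Since within each interval $|R|$ stays within a factor $2$ of $|R(\tau_i)|$ (or below $M$) while the duration is at most $\lambda|R(\tau_i)|^2$, each interval contributes at most a fixed constant to $S$, and the lemma follows.
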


\begin{proof}
Let $\lambda,M>0$ be two parameters to be fixed later. Define stopping
times $\tau_{1}<\tau_{2}<\dotsb$ using $\tau_{1}=1$ and\begin{align*}
\sigma_i&\coloneqq\min\{t>\tau_{i}:|R(t)|>2|R(\tau_{i})|\text{ or }|R(t)|\HNOC{9cm}{\HNtemporaryhack{<\;}{<}}{The multiplication by $\mathbbm{1}_{|R(\tau_{i})|>M}$ simply means
that if $|R(\tau_{i})|\le M$ then this condition can never be fulfilled,
as it translates to $|R(t)|<0$ which cannot happen. Thus, for the
first few steps the process simply stops only when $|R(\tau_{i})|>2|R(\tau_{i})|$.
Only after $|R(\tau_{i})|$ rises above $M$ do we get a `real' random
walk on scales that can either go to lower or to higher scale. }(\tfrac{1}{2}|R(\tau_{i})|+1)\mathbbm{1}_{|R(\tau_{i})|>M}\}\\
\tau_{i+1}&\coloneqq\min\{\sigma_i,\tau_{i}+\lambda|R(\tau_{i})|^{2}\}.
\end{align*}
\HNOC{10cm}{By lemma}{Recall the statement of lemma \ref{lem:exit annulus}:

\emph{Let $a<1<A$ and let $n$ be some number. Consider random walk
$R$ starting from some point in $\partial$B(n) and let 
\[
T=\inf\{t:R(t)\not\in B(An)\text{ or }R(t)\in B(an)\}.
\]
Let $p_{n}=\mathbb{P}(R(T)\not\in B(An)$. Then 
\[
\lim_{n\to\infty}p_{n}=\frac{a^{-2}-1}{a^{-2}-A^{-2}},
\]
uniformly in the starting point.}

\medskip 

We do not need to bother with the case that $|R(\tau_{i})|\le M$
because then $\mathbb{P}(|R(\sigma)|>2|R(\tau_{i})|)=1$ and is certainly
bigger than any $p<\frac{4}{5}.$ We use lemma \ref{lem:exit annulus}
with $n_{\textrm{lemma \ref{lem:exit annulus}}}=|R(\tau_{i})|$, $A=2$
and some $a>\frac{1}{2}$ (e.g. 0.51). We take $M$ sufficiently large
so as to satisfy both that the probability is sufficiently close to
its limit in lemma \ref{lem:exit annulus} and so that the first time
that $|R(t)|<a|R(\tau_{i})|$ it still satisfies $|R(t)|\ge\frac{1}{2}|R(\tau_{i})|+1$.}
\ref{lem:exit annulus}, if $M$ is sufficiently large then there
exists $p>\frac{1}{2}$ such that $\mathbb{P}(|R(\sigma)|>2|R(\tau_{i})|)>p$
for all $i$, even after conditioning on $R(\tau_{i})$ (lemma \ref{lem:exit annulus}
allows to take any $p<\frac{4}{5}$). Fix $M$ to satisfy this property.
Since the probability that random walks stays in $B(r)$ for $\lambda r^{2}$
time is smaller than $Ce^{-c\lambda}$, uniformly in $r$ (lemma \ref{lem:exit expo}),
we see that $\mathbb{P}(\tau_{i+1}=\sigma_{i})>1-Ce^{-c\lambda}$.
Thus, if $\lambda$ is sufficiently large we have 
\[
\mathbb{P}(|R(\tau_{i+1})|>2|R(\tau_{i})|)>p'>\tfrac{1}{2}.
\]
Fix $\lambda$ to satisfy this property. We see that $\log_{2}|R(\tau_{i})|$
is a random walk with a drift. More formally, we can couple $\log_{2}|R(\tau_{i})|$
to the random walk $S$ on $\mathbb{Z}$ that walks from $k$ to $k+1$
with probability $p'$ and to $k-1$ with probability $1-p'$ so that
$\log_{2}|R(\tau_{i})|\ge S_{i}$ always. Standard estimates for one
dimensional random walk with a drift show that there exists some $\mu>0$
such that
\[
\mathbb{P}(\log|R(\tau_{i})|<\mu i)\le Ce^{-\mu i}.
\]
Using this with $i=\lceil2(\log n)/\mu\rceil$ gives\[
\mathbb{P}(\tau_{i}<n)\le
\mathbb{P}(|R(\tau_{i})|<n)
\HiddennoteLink{11cm}{\OptionalColour{\HNtemporaryhack{\le\;}{\le}}}{If $|R(\tau_{i})|<n$ then $\log|R(\tau_{i})|<\log n$ and since $i\ge2(\log n)/\mu$
we get 
\[
\log|R(\tau_{i})|<\mu i/2<\mu i
\]
Here the 2 is not needed. This is for the next inequality.} \mathbb{P}(\log|R(\tau_{i})|<\mu i)
\HiddennoteLink{11cm}{\OptionalColour{\HNtemporaryhack{\le\;}{\le}}}{Recall that all logs are to base 2. So 
\[
e^{-\mu i}\le e^{-2\log n}\le n^{2}
\]
(we do not need the 2 for this step, $2\log_{e}2$ would have been
enough, but 2 is shorter to write).} Ce^{-\mu i}\le Cn^{-2}.
\] On the other hand, between $\tau_{i}$ and $\tau_{i+1}$, the sum
of $(R(t)|+1)^{-2}$ is bounded by $\lambda\max\{M^{2},4\}$. The
lemma is thus proved with $K=C\lambda\max\{M^{2},4\}/\mu$. 
\end{proof}
The next lemma is the corresponding lower bound.
\begin{lem}
\label{lem:sraksraklower}There exists a $K$ with the following property.
Let $R$ be random walk on $\mathbb{Z}^{4}$ starting from $0$ and
let $n>0$. Then 
\[
\mathbb{P}\Big(\sum_{t=1}^{n}(|R(t)|+1)^{-2}<\frac{1}{K}\log n\Big)\le Cn^{-1/K}
\]
\end{lem}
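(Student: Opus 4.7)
The plan is to mirror the structure of Lemma \ref{lem:sraksrak} but in reverse: we want to show that the walk spends enough time at each dyadic scale it crosses, so that each scale contributes $\Omega(1)$ to the sum, and that it crosses $\Theta(\log n)$ scales within time $n$. Define dyadic hitting times $\tau_j \coloneqq \min\{t : |R(t)| \ge 2^j\}$ and set $J \coloneqq \lfloor c_0 \log n \rfloor$ for some small universal $c_0 > 0$ (say $c_0 = \tfrac{1}{4}$).

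First I would show that $\tau_J \le n$ with probability at least $1 - C n^{-c}$. The elementary identity $\mathbb{E}|R(t)|^2 = t$ gives $\mathbb{E}\tau_J \le C \cdot 4^J \le C n^{2c_0}$, and a standard application of Lemma \ref{lem:exit expo} upgrades this to the desired polynomial tail. Next, for each $j<J$, let $A_j$ be the event that $\tau_{j+1}-\tau_j \ge c_1 4^j$. By the strong Markov property at $\tau_j$, applied to a walk whose starting point lies at distance at least $2^j$ from $\partial B(2^{j+1})$, combined with $\mathbb{E}|R(t)-R(0)|^2 = t$ and Markov's inequality, there is a universal $p_0 > 0$ such that $\mathbb{P}(A_j \mid \mathcal{F}_{\tau_j}) \ge p_0$ for every $j$.

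On $A_j$, every $t \in [\tau_j,\tau_{j+1})$ satisfies $|R(t)| < 2^{j+1}$, so the contribution to the sum from this interval is at least $c_1 \cdot 4^j \cdot c \cdot 4^{-j-1} = c_2$. Therefore on $\{\tau_J \le n\}$ one has
\[
\sum_{t=1}^{n}(|R(t)|+1)^{-2} \;\ge\; c_2\,\#\{j<J : A_j\text{ holds}\}.
\]
Since each $\mathbbm{1}_{A_j}$ conditionally dominates an independent Bernoulli$(p_0)$ variable, a standard Chernoff bound yields $\mathbb{P}\bigl(\sum_{j<J}\mathbbm{1}_{A_j} < (p_0/2) J\bigr) \le e^{-cJ} \le C n^{-c}$.

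Combining the two high-probability events, on an event of probability at least $1 - C n^{-c'}$ we obtain $\sum_{t=1}^{n}(|R(t)|+1)^{-2} \ge (c_2 p_0/2) J \ge c_3 \log n$; choosing $K$ sufficiently large compared to $1/c_3$ and $1/c'$ then closes the estimate. The main point requiring a little care is the Chernoff step, where the $A_j$ are not independent but become amenable to Chernoff through the usual conditional coupling with i.i.d.\ Bernoulli$(p_0)$ variables; all other ingredients are routine.
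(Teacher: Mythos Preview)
Your proof is correct and follows essentially the same route as the paper's: dyadic exit times $\tau_j$, show each interval $[\tau_j,\tau_{j+1})$ contributes $\Omega(1)$ to the sum with uniformly positive conditional probability, a Chernoff bound for the number of good scales, and Lemma~\ref{lem:exit expo} to ensure $\tau_J\le n$. One small quibble: Markov's inequality at a fixed time does not control $\max_{s\le t}|R(s)-R(0)|$ and hence does not directly give $\mathbb{P}(A_j\mid\mathcal{F}_{\tau_j})\ge p_0$; you need Kolmogorov's (or Doob's) maximal inequality here, which is exactly what the paper packages as Lemma~\ref{lem:exit time lower} via the reflection principle --- with that replacement your argument matches the paper's.
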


\begin{proof}
The proof is similar to the previous lemma, but easier as we do not
need to control excursions of the random walk `inside'. Define therefore
stopping times $\tau_{i}$ by $\tau_{i}=\min\{t:|R(t)|>2^{i}\}$.
By lemma \ref{lem:exit time lower} we have that $\mathbb{P}(\tau_{i+1}>\tau_{i}+c4^{i})>c$,
where the constants are uniform in the value of $R(\tau_{i})$. Thus,
if we define $G_{i}$ to be the event that $\tau_{i+1}>\tau_{i}+c4^{i}$,
\HNOC{7cm}{then}{$c_{1}$ is a new constant, it was not defined
before. We claim that there exists some constant $c$ such that $\mathbb{P}(G_{i+1}\,|\,G_{1},\dotsc,G_{i})>c$
and named that constant $c_{1}$.} $\mathbb{P}(G_{i+1}\,|\,G_{1},\dotsc,G_{i})>c_{1}$.
We conclude that for every $k$, 
\[
\mathbb{P}(|\{i\le k:G_{i}\}|<(c_{1}/2)k)\le Ce^{-ck}.
\]
To move from stopping times to absolute times we note that, if we
define $k$ using $2^{k}\approx n^{1/4}$ then lemma \ref{lem:exit expo}
says that
\[
\mathbb{P}(\tau_{k}>n)\le Ce^{-c\sqrt{n}}.
\]
Since $G_{i}$ implies that $\sum_{t=\tau_{i}+1}^{\tau_{i+1}}(|R(t)|+1)^{-2}>c$,
the lemma is proved.
\end{proof}
\begin{lem}
\label{lem:Lawler}Let $R_{0},R_{1},\dotsc,R_{k}$ be independent
random walks in $\mathbb{Z}^{4}$ starting from $\partial B(n)$ and
stopped when hitting $\partial B(m)$ for some $m>n$. Assume further
that their starting points satisfy $|R_{0}(0)-R_{i}(0)|>\frac{n}{\log n}$.
Let $E$ be the event that $R_{0}\cap R_{i}\ne\emptyset$ for all
$i$. Then 
\[
\mathbb{P}\bigg(E\cap\Big\{\sum_{i=1}^{k}|R_{0}\cap R_{i}|<ck\log n\Big\}\bigg)<2e^{-ck}+Ckn^{-c}\log\Big(\frac{m\log n}{n}\Big)
\]
\end{lem}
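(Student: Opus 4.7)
Proof proposal. The strategy is a conditional large-deviations argument. Set $I_i:=|R_0\cap R_i|$ and $E_i:=\{I_i\ge 1\}$, so $E=\bigcap_i E_i$; given $R_0$, the variables $I_i$ are mutually independent. Define a good event $\mathcal{G}$ for $R_0$ on which, for every $i\le k$,
\[
\mathbb{E}[I_i\mid R_0,E_i]\ge c\log n, \qquad \mathbb{E}[I_i^2\mid R_0,E_i]\le C(\log n)^2.
\]
Paley--Zygmund then yields $\mathbb{P}(I_i\ge c\log n\mid R_0,E_i)\ge c$, and since the indicators $\mathbbm{1}\{I_i\ge c\log n\}$ are conditionally independent given $R_0$, a Hoeffding/Chernoff bound applied under $\mathbb{P}(\,\cdot\mid R_0,E)$ gives $\sum_i\mathbbm{1}\{I_i\ge c\log n\}\ge ck$ with conditional probability $\ge 1-e^{-ck}$. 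On this event $\sum_i I_i\ge c^2 k\log n$, producing the $2e^{-ck}$ term; the event $\mathcal{G}^c$ will supply the other term.

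For the per-$i$ moment bounds, apply the strong Markov property to $R_i$ at $\tau_i:=\min\{t:R_i(t)\in R_0\}$, and to $R_0$ at $\sigma_i:=\min\{t:R_0(t)=R_i(\tau_i)\}$. Writing $x_i:=R_i(\tau_i)\in R_0$, both continuations $R_i[\tau_i,\cdot]$ and $R_0[\sigma_i,\cdot]$ are, conditionally on $x_i$, independent random walks started at $x_i$; in particular $I_i\ge|R_i[\tau_i,\cdot]\cap R_0[\sigma_i,\cdot]|$, an intersection of two independent walks from a common point. The conditional first moment of the latter reduces to $\sum_{y\in R_0[\sigma_i,\cdot]}G_{B(m)}(x_i,y)$ via the Green's function estimate (lemma \ref{lem:heat kernel}). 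Lemma \ref{lem:sraksraklower} applied to $R_0[\sigma_i,\cdot]$ with duration $\gtrsim n^2$ --- which holds on a high-probability event by lemma \ref{lem:exit time lower} --- gives the time-sum lower bound $c\log n$; in $\mathbb{Z}^4$ each site is visited $O(1)$ times in expectation so the vertex sum is comparable, yielding $\mathbb{E}[I_i\mid R_0,E_i]\ge c\log n$. The matching upper bound on the second moment comes from lemma \ref{lem:the moments that killed me} with $k=1$, $r=2$, and Markov's inequality together with a union bound over $i$ gives $\mathbb{P}(\mathcal{G}^c)\le Ckn^{-c}\log(m\log n/n)$.

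The main obstacle is the first-moment lower bound defining $\mathcal{G}$: the strong Markov decomposition at $\sigma_i$ is delicate because $\sigma_i$ depends on $R_i$ (through the hitting vertex $x_i$), so one first conditions on $(R_i[0,\tau_i],x_i)$ and only then uses strong Markov for $R_0$ at $\sigma_i$; one also has to ensure that this works for the "first-hit" distribution of $x_i$ rather than for a uniformly chosen point of $R_0$. Once the reduction to two independent walks from the same point is in place, the remaining ingredients --- Green's function estimates, the two sraksrak lemmas, the moment bound of lemma \ref{lem:the moments that killed me}, Paley--Zygmund, and the Chernoff bound --- combine routinely to produce both terms in the stated bound.
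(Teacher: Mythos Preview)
Your high-level architecture matches the paper's: condition on $R_0$, get per-$i$ first/second moment control so that Paley--Zygmund gives $\mathbb{P}(I_i\ge c\log n\mid R_0,E_i)\ge c$, then use conditional independence and Chernoff. The genuine gap is exactly the point you flag and then do not resolve: you cannot apply the strong Markov property to $R_0$ at $\sigma_i$. Your proposed fix --- condition first on $(R_i[0,\tau_i],x_i)$, then use strong Markov for $R_0$ --- fails, because that conditioning says that the \emph{entire} trace of $R_0$ avoids $R_i[0,\tau_i-1]$ and contains $x_i$; this constrains $R_0$ after $\sigma_i$ as well as before, so $R_0[\sigma_i,\cdot]$ is not a fresh simple random walk and lemma~\ref{lem:sraksraklower} does not apply to it. For the same reason, lemma~\ref{lem:the moments that killed me} with $k=1$, $r=2$ only bounds the \emph{unconditional} second moment $\mathbb{E}[I_i^2]$; it does not give you $\mathbb{E}[I_i^2\mid R_0,E_i]\le C(\log n)^2$, and Markov's inequality on it produces nothing like an $n^{-c}$ factor.

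The paper's way around this is not to use strong Markov for $R_0$ at all. Instead it declares a time $t$ \emph{good for the path} $R_0$ when the two deterministic conditions
\[
\sum_{s=1}^{\sqrt{n}}|R_0(t{+}s)-R_0(t)|^{-2}>\tfrac{1}{\lambda}\log n,\qquad
\sum_{s_1,s_2=1}^{\sqrt{n}}|R_0(t{+}s_1)-R_0(t)|^{-2}|R_0(t{+}s_1{+}s_2)-R_0(t{+}s_1)|^{-2}\le\lambda\log^2 n
\]
hold; these are exactly the conditional first- and second-moment bounds for a fresh walk from $R_0(t)$ hitting $R_0[t,t+\sqrt n]$, so if $\sigma_i$ happens to be good then your Paley--Zygmund input is available \emph{given the whole of $R_0$}, with no appeal to the law of $R_0[\sigma_i,\cdot]$. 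The error event is then $\{\sigma_i\text{ bad}\}$ (plus a boundary correction $R_0(\sigma_i)\notin B(m-\sqrt n)$), and since ``$t$ bad'' depends only on the increments $R_0(t{+}s)-R_0(t)$ while ``$R_0(t)\in R_i$'' does not, one gets
\[
\mathbb{P}(\sigma_i\text{ bad})\le\sum_t\mathbb{P}(t\text{ bad})\,\mathbb{P}(R_0(t)\in R_i)\le Cn^{-c}\,\mathbb{E}|R_0\cap R_i|\le Cn^{-c}\log\Big(\frac{m\log n}{n}\Big),
\]
which is where the $n^{-c}\log(m\log n/n)$ term actually comes from. A naive union bound over all times $t$ would be hopeless (there can be far more than $n^{c}$ of them), and a bare application of lemma~\ref{lem:sraksraklower} at the single random time $\sigma_i$ is exactly what the broken strong Markov step was supposed to justify; the pathwise ``good time'' notion is the missing idea.
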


\begin{proof}
Let $\lambda$ be some (large) parameter to be fixed later. Following
Lawler \cite[\S 3.4]{Law91}, we define a time $t$ as `good' if 
\begin{enumerate}
\item $\sum_{s=1}^{\sqrt{n}}|R_{0}(t+s)-R_{0}(t)|^{-2}>\frac{1}{\lambda}\log n$
\item $\sum_{s_{1},s_{2}=1}^{\sqrt{n}}|R_{0}(t+s_{1})-R_{0}(t)|^{-2}|R_{0}(t+s_{1}+s_{2})-R_{0}(t+s_{1})|^{-2}\le\lambda\log^{2}n$
\end{enumerate}
(the sums are up to $\sqrt{n}$ and not $n^{2}$ on purpose). For
the purpose of this definition let us extend $R_{0}$ to infinity
(so that a time very close to the hitting time of $\partial B(m)$
can still be good). We wish to show that (for a fixed $t$), the probability
that $t$ is good is high. To estimate the probability that the first
condition is violated we use lemma \ref{lem:sraksraklower} (with
$n_{\textrm{lemma \ref{lem:sraksraklower}}}=\sqrt{n}$) and get that,
if $\lambda$ is sufficiently large, the probability that the first
condition does not hold is $\le Cn^{-c}$.

The estimate for the second condition follows from lemma \ref{lem:sraksrak}.
Indeed, we fix $t$ and $s_{1}$ and use lemma \ref{lem:sraksrak}
for the random walk after time $t+s_{1}$ and get that the probability
that the sum over $s_{2}$ is larger than $C_{1}\log n$ is smaller
than $C/n$ for some $C_{1}$ sufficiently large. Summing over $s_{1}\in[t,t+\sqrt{n}]$
shows that with probability at least $1-C/\sqrt{n}$ this estimate
holds for all $s_{1}$, and then the sum over $s_{1}$ can be estimated
by a second application of lemma \ref{lem:sraksrak} after time $t$.
We get that the second condition holds with probability at least $1-C/\sqrt{n}$,
if only $\lambda\ge C_{1}^{2}$. We fix $\lambda$ to some value satisfying
both requirements and get that $t$ is good with probability $1-Cn^{-c}$.

With the fact that good times are typical established, we can proceed.
Let 
\begin{gather*}
\tau_{i}\coloneqq\{t:R_{i}(t)\in R_{0}\}\qquad\sigma_{i}\coloneqq\min\{t:R_{0}(t)=R_{i}(\tau_{i})\}\\
\rho_{i}\coloneqq\min\{t:R_{i}(t)\not\in B(m)\}.
\end{gather*}
Let $E_{i}$ be the event that $R_{i}\cap R_{0}\ne\emptyset$, that
$\sigma_{i}$ is good, and that $R_{0}(\sigma_{i})\in B(m-\sqrt{n})$.
Since $\tau_{i}$ is a stopping time, the portion of $R_{i}$ after
$\tau_{i}$ is simple random walk starting from $R_{0}(\sigma_{i})$,
and using the definition of a good time and lemma \ref{lem:heat kernel}
we have
\begin{align*}
\mathbb{E}(|R_{i}[\tau_{i},\rho_{i})\cap R_{0}[\sigma_{0},\sigma_{0}+n]|\,\big|\,E_{i}) & \ge c\log n\\
\mathbb{E}(|R_{i}[\tau_{i},\rho_{i})\cap R_{0}[\sigma_{0},\sigma_{0}+n]|^{2}\,|\,E_{i}) & \le C\log^{2}n
\end{align*}
hence there is some $c_{1}>0$ such that 
\[
\mathbb{P}(|R_{i}\cap R_{0}|>c_{1}\log n\,\big|\,E_{i})>c_{1}.
\]
Furthermore, this estimate for $i$ holds even if we condition on
all $j<i$, because of the conditioning on $\sigma_{i}$ being good.
We get from standard inequalities for i.i.d.\ random variables that
\[
\mathbb{P}\bigg(\sum_{i}|R_{i}\cap R_{0}|<ck\log n\,\Big|\,E_{1}\cap\dotsc\cap E_{k}\bigg)<2e^{-ck}.
\]
Thus the same estimate holds when we replace conditioning with intersection,
namely
\[
\mathbb{P}\bigg(E_{1}\cap\dotsc\cap E_{k}\cap\Big\{\sum_{i}|R_{i}\cap R_{0}|<ck\log n\Big\}\bigg)<2e^{-ck}.
\]
Thus we are left with the event $E\setminus(E_{1}\cap\dotsc\cap E_{k}),$
where $E$ is the event from the statement of the lemma. In other
words we need only estimate the probability that for some $i$ either
$\sigma_{i}$ is bad or $R_{0}(\sigma_{i})\not\in B(m-\sqrt{n})$.
The first of those can be estimated with a crude first moment estimate.
Indeed, for $\sigma_{i}$ bad we have
\[
\mathbb{P}(\sigma_{i}\text{ is bad})\le\sum_{t}\mathbb{P}(t\text{ is bad},R_{0}(t)\in R_{i})
\]
Now, the event that $t$ is bad does not depend on $R_{0}(t)$ since
it examines only the future, and $R_{0}$ was extended to infinity
for the purpose of the definition. Hence we can continue to write
\begin{align*}
 & =\sum_{t}\mathbb{P}(t\text{ is bad})\mathbb{P}(R_{0}(t)\in R_{1})\le Cn^{-c}\sum_{t}\mathbb{P}(R_{0}(t)\in R_{1})\\
 & \le Cn^{-c}\log(m\log n/n),
\end{align*}
where the last inequality is from lemma \ref{lem:intersection expectation}.
The other event, i.e.\ $R_{0}(\sigma_{i})\not\in B(m-\sqrt{n})$,
has much smaller probability. Indeed, by lemma \ref{lem:not near my boundary}
the probability that $|R_{0}\setminus B(m-\sqrt{n})|>n^{3/2}$ is
smaller than $\exp(-cn^{1/2})$, while any set of size $\le n^{3/2}$
has probability less than $Cm^{-2}n^{3/2}\le Cn^{-1/2}$ to be hit
by random walk (lemma \ref{lem:heat kernel}). Thus
\[
\mathbb{P}(R_{0}(\sigma_{i})\not\in B(m-\sqrt{n}))\le\exp(-cn^{1/2})+n^{-1/2},
\]
which is much smaller than what we need. Summing over $i$ proves
the lemma.
\end{proof}
\begin{lem}
\label{lem:hard to be hittable}Let $s_{1},s_{2}\in\partial B(n)$
with $|s_{1}-s_{2}|>\frac{n}{\log n}$. Let $r\in\mathbb{N}$ be some
parameter and let $\log^{-1}n\le\varepsilon<\frac{2}{3}$. Let $R_{i}$
be random walks starting from $s_{i}$ and stopped at $\partial B(m)$,
for $n\le m\le ne^{4\sqrt{\log n}}$. 
\begin{equation}
\mathbb{P}(\mathbb{P}(R_{1}\cap R_{2}\ne\emptyset\,|\,R_{1})>1-\varepsilon)\le\frac{K\varepsilon}{\log^{1/4}n}\label{eq:lem hard}
\end{equation}
for some constant $K$.
\end{lem}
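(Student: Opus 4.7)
The approach is a moment-method argument for $Z\coloneqq\mathbb{P}(R_{2}\cap R_{1}\ne\emptyset\mid R_{1})$, split into two regimes according to the size of $\varepsilon$. In the moderate regime $\varepsilon\ge 1/\log^{1/4}n$, Markov's inequality suffices: Lemma~\ref{lem:the moments that killed me} with $k=r=1$ gives $\mathbb{E}\,|R_{1}\cap R_{2}|\le CL$, where $L\coloneqq\log(m\log n/n)\le C\sqrt{\log n}$ (using $m\le ne^{4\sqrt{\log n}}$), while Lemma~\ref{lem:Lawler} with $k=1$ gives $\mathbb{E}[\,|R_{1}\cap R_{2}|\mid R_{1}\cap R_{2}\ne\emptyset]\ge c\log n$. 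Dividing yields $\mathbb{E}Z=\mathbb{P}(R_{1}\cap R_{2}\ne\emptyset)\le C/\sqrt{\log n}$, and Markov then gives $\mathbb{P}(H)\le\mathbb{E}Z/(1-\varepsilon)\le 3C/\sqrt{\log n}\le K\varepsilon/\log^{1/4}n$ provided $K\ge 3C$.

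For the small regime $\varepsilon<1/\log^{1/4}n$, we introduce $k\coloneqq\lceil\log^{1/4}n\rceil$ i.i.d.\ copies $R^{(1)},\dots,R^{(k)}$ of $R_{2}$ starting at $s_{2}$, and set $X\coloneqq\sum_{i=1}^{k}|R_{1}\cap R^{(i)}|$ and $E\coloneqq\{R_{1}\cap R^{(i)}\ne\emptyset\;\forall i\}$. The hypothesis $|s_{1}-s_{2}|>n/\log n$ verifies the distance condition of both Lemmas~\ref{lem:Lawler} and~\ref{lem:the moments that killed me} with $R_{0}\coloneqq R_{1}$. Since $\varepsilon k\le 1$, on $H$ we have $\mathbb{P}(E\mid R_{1})\ge(1-\varepsilon)^{k}\ge e^{-1}$, so $\mathbb{P}(H)\le e\,\mathbb{P}(E)$. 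Combining Lemma~\ref{lem:Lawler}'s bound $\mathbb{P}(E\cap\{X<ck\log n\})\le 2e^{-ck}+Ckn^{-c}L$ (negligible because $k\gtrsim\log^{1/4}n$) with Markov $\mathbb{P}(X\ge ck\log n)\le\mathbb{E}X^{r}/(ck\log n)^{r}$ and Lemma~\ref{lem:the moments that killed me} (noting $k^{2}L\le\log n$ at this $k$, so $(\log m+k^{2}L)^{r-1}\le C\log^{r-1}n$), we obtain $\mathbb{P}(H)\le C(r)/(k^{r-3/2}\sqrt{\log n})=C(r)/\log^{(r+1/2)/4}n$. Taking $r=5$ gives $\mathbb{P}(H)\le C/\log^{11/8}n$; since $\varepsilon\ge 1/\log n$ forces $K\varepsilon/\log^{1/4}n\ge K/\log^{5/4}n$, the required inequality reduces to $K\log^{1/8}n\ge C$, which holds for $K$ sufficiently large.

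The main obstacle is the tension between choosing $k$ large to sharpen the tail bound on $X$ and the factor $(1-\varepsilon)^{-k}$ blowing up when $\varepsilon$ is close to $2/3$; simultaneously, the moment bound of Lemma~\ref{lem:the moments that killed me} degrades once $k^{2}L>\log m$, capping the useful range of $k$ at $\log^{1/4}n$. The dichotomy $\varepsilon\gtrless 1/\log^{1/4}n$ is exactly what reconciles these constraints, with Markov covering the range where the moment method would need $k$ too small.
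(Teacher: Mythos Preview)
Your argument is correct and follows the same mechanism as the paper's proof: run $k$ independent copies of $R_{2}$, use Lemma~\ref{lem:Lawler} to force many intersections under $E$, and then kill $\mathbb{P}(X\ge ck\log n)$ with a high moment from Lemma~\ref{lem:the moments that killed me}. One small imprecision: Lemma~\ref{lem:Lawler} with $k=1$ does \emph{not} yield $\mathbb{E}[\,|R_{1}\cap R_{2}|\mid R_{1}\cap R_{2}\ne\emptyset]\ge c\log n$, since its error term $2e^{-c\cdot 1}$ is just a constant. For the moderate-$\varepsilon$ regime simply quote Lemma~\ref{lem:intersection} directly (as the paper does) to get $\mathbb{P}(R_{1}\cap R_{2}\ne\emptyset)\le C/\sqrt{\log n}$; your Markov step then goes through unchanged. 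Also, $(1-\varepsilon)^{k}\ge e^{-1}$ should read $\ge c$ for some absolute $c>0$ (since $\varepsilon k$ is only bounded by $1+o(1)$, not by $1$), but this is cosmetic.

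Where your write-up differs from the paper is in bookkeeping, and yours is cleaner. The paper splits at a fixed small $\varepsilon=c_{1}$, takes $k\approx\max\{\log\log n,\varepsilon^{-1/6}\}$ depending on $\varepsilon$, and argues by contradiction with the $8^{\mathrm{th}}$ moment, balancing $e^{-ck}$ against $(1-\varepsilon)^{k}$ through several inequalities. You instead split at $\varepsilon=\log^{-1/4}n$, fix $k=\lceil\log^{1/4}n\rceil$ once and for all (so $(1-\varepsilon)^{k}$ is automatically bounded below and $e^{-ck}$ is automatically negligible), and get away with the $5^{\mathrm{th}}$ moment. The price is that in the small-$\varepsilon$ regime you obtain a bound independent of $\varepsilon$ and only recover the factor $\varepsilon$ via the hypothesis $\varepsilon\ge\log^{-1}n$; the paper's $\varepsilon$-dependent $k$ tracks the $\varepsilon$ more intrinsically (and, as their remark notes, can be pushed to give $\varepsilon^{r}$ for any $r$). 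For the present lemma your route suffices and avoids the delicate case analysis.
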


\begin{proof}
Assume already at this step that $K\ge1$. We first note that for
$\varepsilon\in(c,\frac{2}{3})$ the lemma follows immediately from
the fact (lemma \ref{lem:intersection}) that 
\[
\mathbb{P}(R_{1}\cap R_{2}\ne\emptyset)\le\frac{\log((m\log n)/n)}{\log n}\le\frac{C}{\sqrt{\log n}}.
\]
Let therefore $c_{1}>0$ be some constant to be fixed later and assume
$\varepsilon<c_{1}$ (keeping in mind that $K$ might depend on $c_{1}$).
Hence we may also assume that $n$ is sufficiently large since for
small $n$ we will have $\log^{-1}n>c_{1}$ and the lemma follows
from the argument for $\varepsilon\ge c_{1}$. 

Denote the left-hand side of (\ref{eq:lem hard}) by $\delta$. Let
$S_{1},\dotsc,S_{k}$ be $k$ random walkers starting from $s_{2}$.
By assumption, the probability that they all intersect $R_{1}$ is
at least $\delta(1-\varepsilon)^{k}$. Denote this event by $E$.
We use lemma \ref{lem:Lawler} and get\begin{align}
\mathbb{P}\bigg(E\cap\Big\{\sum_{i=1}^{k}|R_{0}\cap S_{i}|<ck\log n\Big\}\bigg)
&<2e^{-ck}+\frac{Ck\log(m\log n/n)}{n^{c}}\nonumber\\
&\HNOC{7cm}{<}{$C_1$, $c_2$ and $c_3$ are all new constants that we just defined (any constants with the same name in other lemmas are unrelated)}
\;2e^{-c_{2}k}+\frac{C_{1}k}{n^{c_{3}}}\label{eq:from Lawler lem}
\end{align}where the second inequality is because $m\le ne^{4\sqrt{\log n}}$.
Let 
\[
k\coloneqq\Big\lceil\max\Big\{\frac{4\log\log n}{c_{2}},\frac{1}{\varepsilon^{1/6}}\Big\}\Big\rceil.
\]
We write 
\[
e^{-c_{2}k}\le e^{-c_{2}k/2}\cdot\frac{1}{\log^{2}n}\stackrel{(*)}{<}(1-\varepsilon)^{k}\cdot\frac{1}{\log^{2}n}
\]
where in $(*)$ we used (for the first term) that $\varepsilon<c_{1}$
for some $c_{1}$ sufficiently small. If $\delta\le K\varepsilon\log^{1/4}n$
then the lemma is proved. In the other case we get $\delta>K\varepsilon\log^{-1/4}n\ge K\log^{-5/4}n$
and hence 
\begin{equation}
2e^{-c_{2}k}<\tfrac{1}{4}\delta(1-\varepsilon)^{k},\label{eq:exp(-ck) is small}
\end{equation}
if only $K\ge8$. 

For the second term in (\ref{eq:from Lawler lem}), we write
\begin{equation}
(1-\varepsilon)^{k}=\min\{(1-\varepsilon)^{\lceil\varepsilon^{-1/6}\rceil},(1-\varepsilon)^{\lceil(4/c_{2})\log\log n\rceil}\}\ge c\log^{-1/4}n,\label{eq:(1-eps)^k}
\end{equation}
where in the last inequality we used that $\varepsilon<c_{1}$ for
some $c_{1}$ sufficiently small. Hence
\[
\tfrac{1}{4}\delta(1-\varepsilon)^{k}\ge\tfrac{1}{4}\frac{K}{\log^{5/4}n}\cdot c\log^{-1/4}n\ge c\log^{-3/2}n\ge\frac{C_{1}k}{n^{c_{3}}}
\]
where the penultimate inequality follows from our assumption that
$K\ge$1 and the last inequality holds for $n>n_{0}$ for some absolute
$n_{0}$ (we used here that $k\le\log^{1/6}n$ which follows from
its definition and $\varepsilon>\log^{-1}n$). Together with (\ref{eq:from Lawler lem})
and (\ref{eq:exp(-ck) is small}) we get
\[
\mathbb{P}\bigg(E\cap\Big\{\sum_{i=1}^{k}|R_{0}\cap R_{i}|<ck\log n\Big\}\bigg)<\tfrac{1}{2}\delta(1-\varepsilon)^{k}.
\]
Hence 
\[
\mathbb{P}\bigg(\sum_{i=1}^{k}|R_{0}\cap R_{i}|\ge ck\log n\bigg)>\tfrac{1}{2}\delta(1-\varepsilon)^{k}
\]
or
\begin{equation}
\mathbb{E}\bigg(\sum_{i=1}^{k}|R_{0}\cap R_{i}|\bigg)^{8}>ck^{8}\log^{8}n\cdot\delta(1-\varepsilon)^{k}.\label{eq:delta above}
\end{equation}
We now use lemma \ref{lem:the moments that killed me} with $r_{\textrm{\ref{lem:the moments that killed me}}}=8$.
We get
\begin{align*}
\lefteqn{\mathbb{E}\bigg(\sum_{i=1}^{k}|R_{0}\cap R_{i}|\bigg)^{8}}\qquad\\
 & \le Ck^{3/2}\log\Big(\frac{m\log n}{n}\Big)\Big(\log(m)+k^{2}\log\Big(\frac{m\log n}{n}\Big)\Big)^{7}\\
 & \le Ck^{3/2}\log^{15/2}n,
\end{align*}
where we used that $m\le ne^{4\sqrt{\log n}}$ and $k\le C\log^{1/6}n$.
With (\ref{eq:delta above}) we get 
\[
\delta<\frac{Ck^{3/2}\log^{15/2}n}{ck^{8}\log^{8}n(1-\varepsilon)^{k}}<\frac{C}{k^{6}(1-\varepsilon)^{k}\log^{1/2}n}<\frac{C\varepsilon}{\log^{1/4}n}.
\]
where in the last inequality we used (\ref{eq:(1-eps)^k}) and $k\ge\varepsilon^{-1/6}$.
The lemma is thus proved.
\end{proof}
\begin{rem}
The same proof can give that 
\[
\mathbb{P}(\mathbb{P}(R_{1}\cap R_{2}\ne\emptyset\,|\,R_{1})>1-\varepsilon)\le\frac{C(r)\varepsilon^{r}}{\log^{1/4}n}
\]
for any desirable $r$ (by increasing the power 8 used in (\ref{eq:delta above})
and elsewhere). The case of $r=1$ will suffice for the proof of the
theorem.
\end{rem}

\begin{lem}
\label{lem:one step coupling}Let $s_{1},s_{2}\in\partial B(n)$ with
$|s_{1}-s_{2}|>\frac{n}{\log n}$. Let $R_{i}$ be random walks starting
from $s_{i}$ and stopped at $\partial B(m)$, for $m\in[2n,ne^{4\sqrt{\log n}}]$.
Let $E_{i}$ be events defined for $R_{i}$. Then there exists a coupling
of $R_{i}$ which satisfies, with probability 
\[
1-C\Big(\mathbb{P}(E_{1})+\mathbb{P}(E_{2})+\frac{1}{\log n}\Big)
\]
that $E_{i}$ happened for $R_{i}$, that $R_{1}\cap R_{2}=\emptyset$
and that the end points of the two walkers $e_{i}$ satisfy $|e_{1}-e_{2}|>\frac{m}{\log m}$.
\end{lem}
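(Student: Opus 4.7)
The plan is to apply the measure-theoretic version of Hall's marriage theorem. Let $\mu,\nu$ be the laws of $R_1,R_2$ on the (countable) space $\mathcal{R}_i$ of stopped trajectories from $s_i$ to $\partial B(m)$, write $e(\gamma)$ for the exit point on $\partial B(m)$ of a path $\gamma$, and define the compatibility relation
\[
G = \bigl\{(\gamma_1,\gamma_2) : E_i \text{ holds for } \gamma_i,\ \gamma_1 \cap \gamma_2 = \emptyset,\ |e(\gamma_1) - e(\gamma_2)| > m/\log m\bigr\}.
\]
Hall in this form gives a coupling $\pi$ of $\mu,\nu$ with $\pi(G) \ge 1 - \delta$ provided that $\mu(A) - \nu(N_G(A)) \le \delta$ for every $A \subseteq \mathcal{R}_1$, where $N_G(A) = \{\gamma_2 : \exists\,\gamma_1\in A,\ (\gamma_1,\gamma_2)\in G\}$. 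The target is $\delta = C(\mathbb{P}(E_1^c) + \mathbb{P}(E_2^c) + 1/\log n)$, reading the error term with the complements.

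Fix $\varepsilon := 1/\log^{3/4} n$, which lies in the allowed range of lemma \ref{lem:hard to be hittable}, and declare $\gamma_1 \in \mathcal{G}_1$ \emph{good} if (a) $E_1$ holds, (b) $\gamma_1$ is not $\varepsilon$-hittable, i.e.\ $\mathbb{P}(R_2 \cap \gamma_1 \ne \emptyset \mid \gamma_1) \le 1-\varepsilon$, and (c) the exit point is isolated in the sense that $\nu(\{\gamma_2 : |e(\gamma_2) - e(\gamma_1)| \le m/\log m\}) \le 1/\log^2 n$. Define $\mathcal{G}_2$ symmetrically. Lemma \ref{lem:hard to be hittable} gives $\mu(\{(\text{b})\text{ fails}\}) \le K\varepsilon/\log^{1/4} n = K/\log n$. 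Since $m \ge 2n$, the $R_2$-exit distribution on $\partial B(m)$ puts mass $O(1/\log^3 m)$ on any cap of radius $m/\log m$ (by summing the Green function bound from lemma \ref{lem:heat kernel} over the cap vertices), and Markov's inequality then gives $\mu(\{(\text{c})\text{ fails}\}) = O(1/\log n)$. Combining, $\mu(\mathcal{G}_1^c) = O(\mathbb{P}(E_1^c) + 1/\log n)$, and $\nu(\mathcal{G}_2^c)$ is bounded symmetrically.

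To check the Hall condition, fix $A \subseteq \mathcal{R}_1$ and split $A = A_g \sqcup A_b$ with $A_g = A \cap \mathcal{G}_1$, so $\mu(A_b) \le \mu(\mathcal{G}_1^c)$. For any $\gamma_2 \in \mathcal{G}_2 \setminus N_G(A_g)$, each $\gamma_1 \in A_g$ is incompatible with $\gamma_2$; since $\gamma_2$'s endpoint is isolated, the $\gamma_1$'s failing compatibility only through the endpoint condition have $\mu$-measure at most $1/\log^2 n$, so
\[
\mathbb{P}(R_1 \cap \gamma_2 \ne \emptyset) \ge \mu(A_g) - 1/\log^2 n.
\]
Thus $\gamma_2$ is $(1 - \mu(A_g) + 1/\log^2 n)$-hittable by $R_1$, and lemma \ref{lem:hard to be hittable} with roles swapped gives
\[
\nu(\mathcal{G}_2 \setminus N_G(A_g)) \le \frac{K(1 - \mu(A_g))}{\log^{1/4} n} + O\Bigl(\frac{1}{\log n}\Bigr).
\]
Using $\nu(N_G(A)^c) \le \nu(\mathcal{G}_2^c) + \nu(\mathcal{G}_2 \setminus N_G(A_g))$ and rearranging,
\[
\mu(A) - \nu(N_G(A)) \le -(1 - \mu(A_g))\bigl(1 - K/\log^{1/4} n\bigr) + O\bigl(\mathbb{P}(E_1^c) + \mathbb{P}(E_2^c) + 1/\log n\bigr),
\]
and for $n$ large the first term is non-positive, giving the required bound.

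The main obstacle will be the endpoint estimate (c): getting a uniform bound on the $R_2$ exit measure on caps of $\partial B(m)$ from any starting point in $\partial B(n)$ requires slightly more than the pointwise Green function bound, especially when $e(\gamma_1)$ lies near $\partial B(n)$. The hypothesis $m \ge 2n$ provides the annular buffer needed for discrete harmonic-measure estimates, and this is the one place in the proof where the separation between $n$ and $m$ is used essentially.
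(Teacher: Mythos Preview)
Your framework is the same as the paper's---Hall/Strassen applied to the compatibility relation $G$---and the ingredients you assemble (the hittability bound of lemma~\ref{lem:hard to be hittable}, the harmonic-measure cap estimate, the good-set reduction) are the right ones. But the verification of the Hall condition has a genuine gap.

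Your bound $\nu(\mathcal{G}_2\setminus N_G(A_g))\le K(1-\mu(A_g))/\log^{1/4}n+O(1/\log n)$ is obtained from lemma~\ref{lem:hard to be hittable} with $\varepsilon=1-\mu(A_g)+1/\log^2 n$, and that lemma is stated only for $\varepsilon<2/3$, i.e.\ for $\mu(A_g)>1/3$. For smaller $\mu(A_g)$ the bound is not merely unproved but false: take $A_g=\emptyset$ and it reads $\nu(\mathcal{G}_2)\le O(1/\log^{1/4}n)$. Your final displayed inequality therefore covers only the regime $\mu(A_g)>1/3$. Condition~(b) with the fixed threshold $\varepsilon=1/\log^{3/4}n$ does not rescue the intermediate range: it gives each $\gamma_1\in\mathcal{G}_1$ a $G$-neighbourhood of $\nu$-mass $\gtrsim 1/\log^{3/4}n$, which handles $\mu(A_g)\lesssim 1/\log^{3/4}n$ but leaves the window $1/\log^{3/4}n\lesssim\mu(A_g)\le 1/3$ untouched. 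A single fixed threshold cannot close this window: raising it to $1/3$ would inflate $\mu(\mathcal{G}_1^c)$ to order $1/\log^{1/4}n$, ruining the error budget.

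The paper closes exactly this gap by a second, \emph{unswapped} use of lemma~\ref{lem:hard to be hittable} with a threshold that depends on the set under consideration: for $\mu(A_g)\in(\mu,1/2]$ one applies the lemma with $\varepsilon\approx\mu(A_g)$ to conclude that the set of $\gamma_1$ whose $G$-neighbourhood has $\nu$-mass below $\mu(A_g)$ has $\mu$-mass $\le K\mu(A_g)/\log^{1/4}n<\mu(A_g)$, hence some $\gamma_1\in A_g$ already has $\nu(N_G(\{\gamma_1\}))\ge\mu(A_g)$. Adding this case to your argument would complete it; as written, the Hall deficiency is not controlled in the intermediate regime. (Incidentally, your condition~(b) is never actually used in your main chain of inequalities except to pad $\mu(\mathcal{G}_1^c)$; the work it ought to be doing is precisely this missing variable-threshold step.)
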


\begin{proof}
We may assume $n$ is sufficiently large as otherwise the lemma can
be made to hold vacuously by taking $C$ sufficiently large. Similarly
we may assume $\mathbb{P}(E_{i})$ are sufficiently small. We may
assume without loss of generality that the events $E_{i}$ contain
the event that the time that $R_{i}$ reached $\partial B(m)$ is
at most some $T$ ($T$ is the same for $R_{1}$ and $R_{2}$). Indeed,
if this is not the case, add this condition, apply the lemma, and
then take $T\to\infty$ and take a weak subsequential limit of the
resulting couplings, in the space of measures on couples of paths.
Hence we will make this assumption from now on.

Let $A'_{i}$ be the set of paths of length $T$ starting from $s_{i}$
and such that $E_{i}$ holds. Denote $\mu\coloneqq\mathbb{P}(E_{1})+\mathbb{P}(E_{2})+1/\log n$.
Define 
\[
A_{i}=A_{i}'\setminus\Big\{\gamma:\mathbb{P}(R_{3-i}\cap\gamma\ne\emptyset)<1-3\mu\Big\}.
\]
(here and below intersections are only counted up to the first hitting
time of $\partial B(m)$ --- the rest of the path is ignored, for
both $\gamma$ and $R_{3-i}$). \HNOC{6cm}{By lemma }{Here we do
not need the $\log^{-1/4}$ factor in lemma \ref{lem:hard to be hittable}
--- we will need it later when we use this lemma again.}\ref{lem:hard to be hittable},
$|A_{i}|\ge2^{T}(1-C\mu)$. Assume for simplicity of notation that
$|A_{1}|\le|A_{2}|$.

We wish to use the marriage lemma \cite{marriage}. We therefore construct
a bipartite graph on $A_{1}\cup A_{2}$ such that each $\gamma\in A_{1}$
is connected to all $\delta\in A_{2}$ such that $\gamma\cap\delta=\emptyset$
and such that $|h(\gamma_{1})-h(\gamma_{2})|>m/\log m$, where $h(\gamma)$
is the point where the path first hits $\partial B(m)$, which must
exist for every $\gamma\in A_{1}\cup A_{2}$ by our assumption on
$T$. For a set $B\subseteq A_{1}$ we denote 
\[
\partial B\coloneqq\{\gamma\in A_{2}:\exists\delta\in B:\gamma\sim\delta\}
\]
To apply the marriage lemma to $A_{1}$ we need to show that for every
$B\subseteq A_{1}$, $|\partial B|\ge|B|$.

We first note that by definition, every $\gamma\in A_{1}$ satisfies
that $\mathbb{P}(R_{2}\cap\gamma\ne\emptyset)>3\mu$ so 
\[
\mathbb{P}(\{R_{2}\cap\gamma\ne\emptyset\}\setminus E_{2})>2\mu.
\]
Throwing in the ending condition gives
\[
\mathbb{P}\Big(\{R_{2}\cap\gamma\ne\emptyset\}\setminus\Big(E_{2}\cup\Big\{|h(\gamma)-h(R_{2})|\le\frac{m}{\log m}\Big\}\Big)\Big)>2\mu-\frac{C}{\log^{3}m}
\]
where we used $m\ge2n$ and lemma \ref{lem:hitting measure}. Further,
$2\mu-C/\log^{3}m>\mu$, since $\mu\ge1/\log n$ and $n$ is sufficiently
large. In $\partial$ notation, this gives $|\partial\{\gamma\}|>\mu2^{T}$.
Hence we can restrict our attentions to $B$ such that $|B|>\mu2^{T}$.
Denote $\varepsilon=|B|2^{-T}$. 

Apply lemma \ref{lem:hard to be hittable}, with  $\varepsilon_{\textrm{lemma \ref{lem:hard to be hittable}}}=\varepsilon+2\mu$.
Since lemma \ref{lem:hard to be hittable} requires $\varepsilon+2\mu<2/3$,
let us assume that $\mu<\frac{1}{6}$ so we can take any $B$ with
$|B|\le\frac{1}{2}2^{T}$. We get that 
\[
\mathbb{P}(\mathbb{P}(R_{1}\cap R_{2}\ne\emptyset\,|\,R_{1})>1-\varepsilon-2\mu)\le\frac{C(\varepsilon+2\mu)}{\log^{1/4}n}<\varepsilon
\]
for $n$ sufficiently large (recall that $\varepsilon>\mu$). Adding
$E_{2}$ and the endpoint condition, as above, shows that 
\[
\mathbb{P}(\mathbb{P}(\{R_{1}\cap R_{2}=\emptyset\}\setminus(E_{2}\cup\{|h(R_{1})-h(R_{2})|\le m/\log m\})\,|\,R_{1})<\varepsilon)<\varepsilon.
\]
In $\partial$ notation this means that $|\{\gamma:|\partial\{\gamma\}|<\varepsilon2^{T}\}|<\varepsilon2^{T}$.
But this means that there is at least one $\gamma\in B$ which does
not satisfy this condition, and hence $|\partial\{\gamma\}|\ge\varepsilon2^{T}=|B|$.
But this of course means that $|\partial B|\ge|\partial\{\gamma\}|\ge|B|$,
so in this case too the condition is satisfies.

We are left with the case $|B|>\frac{1}{2}2^{T}$. In this case we
turn our attention to $A_{2}$, and recall that any $\gamma\in A_{2}$
satisfies that $\mathbb{P}(\gamma\cap R_{1}=\emptyset)>3\mu$. Again
we add the condition $E_{1}$ and the end point condition and get
$|\partial\{\gamma\}|>\mu$. In particular every set $B$ with $|B|\ge|A_{1}|-\mu$
has every $\gamma$ in its boundary, i.e.~$|\partial B|=|A_{2}|\ge|A_{1}|\ge|B|$.
Finally, let $\varepsilon=(|A_{1}|-|B|)/2^{T}$ and note that the
only remaining case is $\frac{1}{2}>\varepsilon>\mu$. We apply lemma
\ref{lem:hard to be hittable}, this time with $R_{1}$ and $R_{2}$
exchanged, and get
\[
\mathbb{P}(\mathbb{P}(R_{2}\cap R_{1}\ne\emptyset)\,|\,R_{2})>1-\varepsilon-2\mu)\le\frac{C(\varepsilon+2\mu)}{\log^{1/4}n}<\varepsilon
\]
for $n$ sufficiently large. Again we conclude that $|\{\gamma\in A_{2}:|\partial\{\gamma\}|<\varepsilon2^{T}\}|<\varepsilon2^{T}$.
Every $\gamma$ which satisfy $|\partial\{\gamma\}|\ge\varepsilon2^{T}$
must also satisfy $\partial\{\gamma\}\cap B\ne\emptyset$ or equivalently
$\gamma\in\partial B$. We get 
\begin{align*}
|\partial B| & \ge|\{\gamma\in A_{2}:|\partial\{\gamma\}|\ge\varepsilon2^{T}\}|=|A_{2}|-|\{\gamma\in A_{2}:|\partial\{\gamma\}|<\varepsilon2^{T}\}|\\
 & \ge|A_{1}|-\varepsilon2^{T}=|B|,
\end{align*}
as needed. This finishes the requirements of the marriage lemma and
we get that there exists some one-to-one map $\varphi:A_{1}\to A_{2}$
such that $\gamma\sim\varphi(\gamma)$.

The coupling is derived from $\varphi$ by chopping the parts of the
walk after the hitting time of $\partial B(m)$. Indeed, a coupling
is simply a measure on the space of couples of paths, and we take
this measure to be $\mu+\nu$ where
\[
\mu(\gamma,\delta)=\frac{1}{2^{T}}|\{(\gamma',\delta')\in A_{1}\times A_{2}:\gamma=\gamma'[0,\tau(\gamma')],\delta=\delta'[0,\tau(\delta')],\delta'=\varphi(\gamma')\}|
\]
where $\tau(\gamma')$ is the time $\gamma'$ hit $\partial B(m)$;
and where $\nu$ is an arbitrary coupling of $A_{1}^{c}$ with $\varphi(A_{1})^{c}$
(say i.i.d.). The lemma is proved.
\end{proof}
\begin{lem}
\label{lem:forEi}There exists some constant $C_{1}$ such that the
following holds. Let $x\in B(n)$ and let $k<n$. Let $R$ be random
walk starting from $x$ and stopped on $\partial B(n),$and let $y$
be its hitting point. For a $z\in\partial B(n)$ denote by $R_{2}$
a second random walk started from $z$ and continuing till infinity.
Let $H=H_{n,k}$ be the event
\begin{multline*}
H=\Big\{\exists z\in\partial B(n):|z-y|>k,\\
\mathbb{P}(R_{2}\cap R\ne\emptyset\,|\,R)>C_{1}\frac{(\log\log n)\log(n\log^{3}n)/k}{\log n}\Big\}
\end{multline*}
(this is an event on $R$). Then
\[
\mathbb{P}(H)<\frac{C}{\log^{2}n}.
\]
\end{lem}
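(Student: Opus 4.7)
The plan is to combine a moment method with a union bound over a suitable discretization of $\partial B(n)$. After a negligible conditioning (via lemma \ref{lem:exit expo}) on $R$ having length at most $n^{10}$, I would cover $\partial B(n)$ by cells $\{C_j\}$ of diameter $\le k/100$, giving $O((n/k)^{3})$ cells. Write $p(z,R) := \mathbb{P}(R_{2}\cap R\ne\emptyset\mid R)$ where $R_{2}$ is the walk started at $z$. The first reduction is to a single representative per cell: when $z$ is at distance $\ge k/2$ from $y$, the map $z\mapsto p(z,R)$ varies by at most a constant multiplicative factor within a cell, by comparing harmonic measures at scale $\mathrm{diam}(C_j)$ against a target set at distance $\ge k/2$. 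Thus it suffices to bound $\mathbb{P}(p(z_j,R) > C_1'\tau_k)$ for each cell representative $z_j$ whose cell is at distance $\ge k/2$ from $y$, where $\tau_k = C_1(\log\log n)\log((n\log^{3} n)/k)/\log n$.

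For the single-$z$ bound, I would use the moment method. Fix $z = z_j$ and let $p, r$ be positive integers to be chosen. Take $p$ i.i.d.\ copies $S_{1},\dots,S_{p}$ of random walk from $z$, extended to (say) $\partial B(n^{2})$, which captures all intersections with $R\subset B(n)$. Then
\[
\mathbb{E}_R\bigl[p(z,R)^{p}\bigr] \;=\; \mathbb{P}(\text{all } S_{i}\text{ hit } R) \;\le\; \mathbb{P}\Bigl(\sum_{i=1}^{p} |S_{i}\cap R| \ge p\Bigr) \;\le\; p^{-r}\,\mathbb{E}\Bigl[\Bigl(\sum_{i=1}^{p} |S_{i}\cap R|\Bigr)^{r}\Bigr].
\]
The right-hand moment I would bound by adapting the tree-summation argument of lemma \ref{lem:the moments that killed me}. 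The catch is that all $S_{i}$ share the starting point $z$, violating the separation hypothesis of that lemma; however, only the factors $s_{e}^{-2}$ attached to edges incident to the common starting vertex in the minimal spanning tree need to be reworked, and they get replaced by $(|z-x_{v}|+1)^{-2}$ factors, which are controllable because $z\in\partial B(n)$ is at distance $\ge k/2$ from the end portion of $R$ near $y$. The resulting bound has the form $C(r,p)\bigl(\log((n\log^{3} n)/k)\bigr)^{p}(\log n)^{r-1}$.

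Putting it together via Markov's inequality and choosing $p = \lceil C\log\log n\rceil$ with $r$ a large absolute constant gives a per-cell bound of the shape $\tau_{k}^{-p}\cdot C(r,p)(\log((n\log^{3} n)/k))^{p}(\log n)^{r-1}/p^{r}$. The precise form of $\tau_{k}$ is calibrated so that this is comfortably smaller than $(k/n)^{3}/\log^{2} n$, which survives the union bound over the $O((n/k)^{3})$ cells and yields the required $C/\log^{2} n$. The main obstacle is the adaptation of lemma \ref{lem:the moments that killed me} to walks with a common starting point: the base case of the induction and the treatment of the spanning-tree edges incident to $z$ (which now plays the role of the collapsed starting-point vertices) must be reworked carefully, producing $\log((n\log^{3} n)/k)$ factors in place of some of the $\log n$ factors in the original estimate, which is ultimately what forces the shape of the threshold $\tau_{k}$.
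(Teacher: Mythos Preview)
There is a genuine gap, and the approach differs from the paper's in a way that matters. The paper's proof is pathwise, not moment-based: it isolates a high-probability set of ``good'' paths $\gamma$ via a \emph{no-recoil} condition (once $\gamma$ exits $B(n-2^i)$ it stays within $2^i\log^3 n$ of the exit point), a weight bound $|\gamma\cap(B(n)\setminus B(n-2^i))|\le K4^i\log\log n$, and sparsity of ``bad times'', and then shows that every good $\gamma$ satisfies $p(z,\gamma)\le\tau_k$ \emph{deterministically and simultaneously for all} $z$ with $|z-y|>k$, via a first/second-moment estimate on $|R_2\cap\gamma|$. The factor $\log((n\log^3 n)/k)$ in $\tau_k$ arises from an annular decomposition: by no-recoil the portion of $\gamma$ in $B(n)\setminus B(n-2^i)$ lies within $2^i\log^3 n$ of $y$, hence at distance $\gtrsim k$ from $z$ whenever $2^i\log^3 n\ll k$; only the $\approx\log((n\log^3 n)/k)$ outer annuli contribute.

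Your scheme cannot reproduce this $k$-dependence and is also quantitatively broken. First, the moment $\mathbb{E}[(\sum|S_i\cap R|)^r]$ averages over $R$, so the condition $|z-y|>k$ (an event on $R$'s random endpoint) is lost; your assertion that the factors $(|z-x_v|+1)^{-2}$ are controllable ``because $z$ is at distance $\ge k/2$ from the end portion of $R$ near $y$'' presupposes exactly the no-recoil localisation --- the intersection points $x_v\in R$ can lie anywhere in $B(n)$, and $R$ may wander near $z$ even when its terminal point $y$ is far from $z$. Second, plug in your own numbers: with $p=\lceil C\log\log n\rceil$, $r$ fixed, and $L\coloneqq\log((n\log^3 n)/k)$, your per-cell bound is
\[
\tau_k^{-p}\cdot\frac{C(r,p)\,L^p(\log n)^{r-1}}{p^r}
=\frac{C(r,p)\,(\log n)^{p+r-1}}{(C_1\log\log n)^p\,p^r},
\]
since $\tau_k^{-p}L^p=(\log n/(C_1\log\log n))^p$; the surviving $(\log n)^p$ makes this tend to infinity. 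Replacing $\mathbb{P}(\sum\ge p)$ by $\mathbb{P}(\sum\ge cp\log n)$ via the lemma~\ref{lem:Lawler} device gains only a single factor of $\log n$, not $(\log n)^p$. What the Markov step would actually need is $\mathbb{E}[p(z,R)^p]\le(C/\log n)^p$, i.e.\ essentially that $p(z,R)\le C/\log n$ pathwise for typical $R$ --- and establishing that is precisely the good-path argument the paper carries out.
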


(this lemma will be used with $k=n/\log n$, so the reader may think
about this case only).
\begin{proof}
We may assume that $k>n^{3/4}$ as otherwise the lemma holds trivially
if we take $C_{1}\ge4$. Similarly we may assume $n$ is sufficiently
large.

Let $K$ be some parameter to be fixed later. Let $\gamma$ be a path
starting from $x$ and stopped at $\partial B(n)$, and let $\ell$
be its length. Let $t\in\{1,\dotsc,\ell-n\}$. As in the proof of
lemma \ref{lem:Lawler}, we say that $t$ is a good time for $\gamma$
if the following two conditions hold.
\begin{enumerate}
\item [(1)]$\sum_{s=1}^{n}|\gamma(t+s)-\gamma(t)|^{-2}>\frac{1}{K}\log n$
\item [(2)]$\sum_{s_{1},s_{2}=1}^{n}|\gamma(t+s_{1})-\gamma(t)|^{-2}|\gamma(t+s_{1}+s_{2})-\gamma(s+s_{1})|^{-2}\le K\log^{2}n$
\end{enumerate}
We say that $\gamma$ itself is good if the following conditions
hold
\begin{enumerate}
\item \label{enu:no recoil}Let $i\in\{1,\dotsc,\lceil\log_{2}n\rceil\}$,
and let $\tau_{i}$ be the first time the path $\gamma$ exited $B(n-2^{i})$.
Then we require that the portion of $\gamma$ after $\tau_{i}$ does
not leave the ball $B(\gamma(\tau_{i}),2^{i}\log^{3}n).$
\item \label{enu:weight}For every $i\in\{1,\dotsc,\lceil\log_{2}n\rceil\}$
we want $|\gamma\cap(B(n)\setminus B(n-2^{i}))|\le K4^{i}\log\log n$.
\item \label{enu:bad}If $\gamma_{b}=\{\gamma(t):t\text{ bad}\}$ then $\max\{\mathbb{P}(R_{2}\cap\gamma_{b}\ne\emptyset):z\textrm{ s.t. }|z-y|>k\}\le1/\log^{3}n$,
where $z$, $R_{2}$, $y$ and $k$ are as in the statement of the
lemma.
\end{enumerate}
Our plan is to show that every good $\gamma$ belongs to $H$, and
then to estimate the probability that $\gamma$ is good. 

We start with showing that $\gamma\in H$. Fix therefore one good
$\gamma$ and one $z\in\partial B(n)$ with $|z-y|>k$ (as in the
definition of $H$, $y$ is the end point of $\gamma$). Let $\gamma_{b}$
be the set of $\gamma(t)$ for all bad $t$ and let $\gamma_{g}\coloneqq\gamma\setminus\gamma_{b}$.
We write
\[
\mathbb{P}(R_{2}\cap\gamma\ne\emptyset)\le\mathbb{P}(R_{2}\cap\gamma_{b}\ne\emptyset)+\mathbb{P}(R_{2}\cap\gamma_{g}\ne\emptyset)=I+II.
\]
(Recall that $R_{2}$ is a random walk starting from $z$ and continuing
till infinity). By property \ref{enu:bad}, $I\le1/\log^{3}n$. 

To bound $II$ we write, as in lemma \ref{lem:Lawler},
\begin{equation}
II\le\frac{\mathbb{E}(|R_{2}\cap\gamma|)}{\mathbb{E}(|R_{2}\cap\gamma|\,\big|\,R_{2}\cap\gamma_{g}\ne\emptyset)}\label{eq:num denum}
\end{equation}
and estimate both terms. To estimate the numerator examine one $i\in\{1,\dotsc,\lceil\log_{2}n\rceil\}$
and let $X=\gamma\cap(B(n-2^{i-1})\setminus B(n-2^{i}))$. By \ref{enu:no recoil}
we know that $X\subset B(\gamma(\tau_{i}),2^{i}\log^{3}n)$ and further,
that $y\in B(\gamma(\tau_{i}),2^{i}\log^{3}n)$, so the fact that
$|z-y|>k$ implies that $\dist(z,X)\ge\max\{k-2^{i}\log^{3}n,2^{i-1}\}$.
By \ref{enu:weight} we know that $|X|\le K4^{i}\log\log n$. Bounding
the probability simply by the expected size of the intersection (using
lemma \ref{lem:heat kernel}) we get
\[
\mathbb{P}(R_{2}\cap X_{i}\ne\emptyset)\le\frac{K4^{i}\log\log n}{\max\{k-2^{i}\log^{3}n,2^{i-1}\}^{2}}.
\]
summing over $i$ gives 
\[
\mathbb{E}(|R_{2}\cap\gamma|)\le\sum_{i=1}^{\log n}\frac{K4^{i}\log\log n}{\max\{k-2^{i}\log^{3}n,2^{i-1}\}^{2}}\le CK(\log\log n)\log\frac{n\log^{3}n}{k}
\]
To estimate the denominator in (\ref{eq:num denum}) we notice, as
in the proof of lemma \ref{lem:Lawler}, that if you hit $\gamma$
in a good point then you have, always, probability at least some $c>0$
to hit $\gamma$ in $c\log n$ points afterwards. Hence the denominator
is $\ge c\log n$. Thus we get $II\le CK(\log\log n)(\log((n\log^{3}n)/k))/\log n$.
Together with the estimate of $I$ we get that, indeed, $\gamma\in H$,
if only $C_{1}$ is chosen sufficiently large (depending on $K$,
which has not yet been fixed).

We move to showing that the probability that a random walk from $x$
to $\partial B(n)$ is good is high. Denote the random walk by $R$.
For clause \ref{enu:no recoil} we fix one $i$. Lemma \ref{lem:escape curved boundary}
shows that the probability that $R$ exits $B(\tau_{i},2^{i}\log^{3}n)$
($\tau_{i}$ as in \ref{enu:no recoil}) before exiting $\partial B(n)$
is $\le C/\log^{3}n$. Summing over $i$ gives that the probability
to not satisfy \ref{enu:no recoil} is $\le C/\log^{2}n$. 

For clause \ref{enu:weight} let us again fix $i$. By lemma \ref{lem:exit annulus},
\begin{equation}
\mathbb{P}(|R\cap(B(n)\setminus B(n-2^{i}))|>\lambda4^{i})\le Ce^{-c\lambda}.\label{eq:not near my boundary}
\end{equation}
Using this with $\lambda=C_{1}\log\log n$ for some $C_{1}$ sufficiently
large and summing over $i$ we get that the probability to not satisfy
\ref{enu:weight} is $\le C/\log^{2}n$.

Finally, for clause \ref{enu:bad}, we can use a relatively rough
bound. We first note that for any $\gamma$, 
\begin{multline}
\max\{\mathbb{P}(R_{2}\cap\gamma_{b}\ne\emptyset):z\textrm{ s.t. }|z-y|>k\}\\
\le\sum_{i=1}^{\log_{2}n}C4^{-i}|\gamma_{b}\cap(B(n-2^{i})\setminus B(n-2^{i+1}))|.\label{eq:bad is just sizes}
\end{multline}
To apply this for $\gamma$ being a random walk, it will be convenient
to extend $R$ to infinity and say that a time $t$ is `$\text{bad}^{\infty}$'
if it satisfies conditions (1)-(2) with respect to this extension.
Thus any bad $t$ is also bad$^{\infty}$ but near the hitting time
of $\partial B(n)$ there might be bad$^{\infty}$ times which are
not bad. We denote $R_{\infty}=\{R(t):t\text{ bad}^{\infty}\}$ and
our next goal would be to estimate the size of $R_{\infty}\cap\text{the same annuli}$.

Let us first note that for any $t$, $\mathbb{P}(t\text{ is bad}^{\infty})\le Cn^{-c}$.
This was proved in the first two paragraphs of the proof of lemma
\ref{lem:Lawler} so let us not repeat the argument. Further, for
any $t$ we have that the event $t<\ell$ and the variable $R(t)$
are both independent of the event `$t$ is bad$^{\infty}$'. This
holds because $t<\ell$ examines only $R[0,t]$ while being bad examines
only $R(t,t+n)-R(t)$. Thus we get
\begin{multline*}
\mathbb{E}|R_{\infty}\cap(B(2^{n}-i^{i})\setminus B(2^{n}-2^{i+1})|\\
\le\sum_{t=0}^{\infty}\mathbb{P}(t\le\ell,R(t)\in B(2^{n}-2^{i})\setminus B(2^{n}-2^{i+1})\cdot\mathbb{P}(t\text{ bad}^{\infty})\le Cn^{-c}\cdot C4^{i},
\end{multline*}
where the last inequality follows from (\ref{eq:not near my boundary}).
Thus,
\[
\mathbb{E}\Big(\sum_{i=1}^{\log_{2}n}4^{-i}\mathbb{E}|R_{\infty}\cap(B(2^{n}-i^{i})\setminus B(2^{n}-2^{i+1}))|\Big)\le Cn^{-c}.
\]
Markov's inequality now tells us that
\[
\mathbb{P}\Big(\sum_{i=1}^{\log_{2}n}4^{-i}\mathbb{E}|R_{\infty}\cap(B(2^{n}-i^{i})\setminus B(2^{n}-2^{i+1}))|>n^{-c}\Big)\le Cn^{-c}
\]
Together with (\ref{eq:bad is just sizes}) and the fact that $R_{b}\subseteq R_{\infty}$
we get
\[
\mathbb{P}(\max\{\mathbb{P}(R_{2}\cap R_{b}\ne\emptyset):z\textrm{ s.t. }|z-y|>k\}>Cn^{-c}\,|\,R)\le Cn^{-c}.
\]
This was the last estimate. We get that $R$ is good with probability
$\ge C/\log^{2}n$, and the lemma is proved.
\end{proof}

\begin{proof}
[Proof of the theorem]Let $n_{1}$ be some constant to be fixed later.
We will construct the the coupling in steps, where at the $n^{\textrm{th}}$
step we have, with probability $p_{n}$, that for all $m\in[n_{1},n]$,
\begin{enumerate}
\item $H_{2^{m^{2}},2^{m^{2}}/m^{2}}$ does not hold for $R_{i}$;
\item $|R_{1}(\tau_{m,1})-R_{2}(\tau_{m,2})|>2^{m^{2}}/m^{2}$; and
\item $R_{1}[0,\tau_{m,1}]\cap R_{2}[0,\tau_{m,2}]=\emptyset$.
\end{enumerate}
Here $H$ is from lemma \ref{lem:forEi} and $\tau_{m,i}$ is the
time when $R_{i}$ hit $\partial B(2^{m^{2}})$. And, of course, we
will show that $p_{n}$ stays bounded below. For the first step take
 two independent random walks from $0$ and from $x$ to $\partial B(2^{n_{1}^{2}})$.
By a theorem of Lawler's (\cite[theorem 4.4.1]{Law91} for the case
that $x$ and 0 are neighbours), they have probability $\gtrsim1/n_{1}$
to not intersect. By lemma \ref{lem:forEi} the probability that they
satisfy $H_{2^{n_{1}^{2}}/n_{1}^{2}}$ is $\approx1/n_{1}^{4}$, and
the probability that they end up in points with distance at least
$2^{n_{1}^{2}}/n_{1}^{2}$ it $\approx1/n_{1}^{6}$ (lemma \ref{lem:hitting measure}).
Hence if $n_{1}$ is sufficiently large the probability that all 3
conditions are met is $\ge c/n_{1}-C/n_{1}^{4}-Cn_{1}^{6}$ and is
positive. This establishes the first step.

Assume now by induction that the coupling has been defined up to $n$.
To define the next step we apply lemma \ref{lem:one step coupling}.
We use it with $n_{\textrm{lemma \ref{lem:one step coupling}}}=2^{n^{2}}$,
$m_{\textrm{lemma \ref{lem:one step coupling}}}=2^{(n+1)^{2}}$ and
with the $s_{i}=R_{i}(\tau_{n,i})$. By definition the requirements
of lemma \ref{lem:one step coupling} from its $n$, $m$ and $s_{i}$
are satisfied.

As for the event $E_{i}$ of lemma \ref{lem:one step coupling}, we
define 
\[
E_{i}=\{R_{i}[\tau_{n,i},\tau_{n+1,i}]\cap R_{3-i}[0,\tau_{n,3-i}]=\emptyset\}\cup\{H_{2^{(n+1)^{2}},2^{(n+1)^{2}}/(n+1)^{2}}(R_{i})\}
\]
(the notation $H(R_{i})$ means that the event holds for $R_{i}$).
Note that the first condition involves the `other' walk, but only
its past. So the coupling we will get from the lemma also depends
on the past of the walkers. Lemma \ref{lem:one step coupling} will
now give us all 3 requirements for the next step of the induction,
so we only need to estimate $p_{n+1}$, which will be $p_{n}\cdot(\text{what comes from lemma \ref{lem:one step coupling}})$.

To estimate the probability of the $E_{i}$ we note that the first
condition ($R_{i}(\textrm{present})\cap R_{3-i}(\textrm{past})=\emptyset)$
is, by our inductive assumptions 1 and 2, at least 
\[
C_{1}\frac{(\log\log(2^{n^{2}}))\log(2^{n^{2}}n^{6}/(2^{n^{2}}/n^{2}))}{\log(2^{n^{2}})}\approx\frac{\log^{2}n}{n^{2}}.
\]
As for the second condition, by lemma \ref{lem:forEi} we have 
\[
\mathbb{P}(H_{2^{(n+1)^{2}},2^{(n+1)^{2}}/(n+1)^{2}})\le\frac{1}{\log^{2}\big(2^{(n+1)^{2}}/(n+1)^{2}\big)}\approx\frac{1}{n^{4}}.
\]
Thus $\mathbb{P}(E_{i})\approx(\log n)^{2}/n^{2}$ and plugging this
into lemma \ref{lem:one step coupling} we get that the coupling at
the $n^{\textrm{th}}$ step succeeds with probability 
\[
1-C\left(\mathbb{P}(E_{1})+\mathbb{P}(E_{2})+\frac{1}{\log2^{n^{2}}}\right)=1-O\left(\frac{\log^{2}n}{n^{2}}\right)
\]
or 
\[
p_{n+1}\ge p_{n}\left(1-C\frac{\log^{2}n}{n^{2}}\right).
\]
Fix $n_{1}$ so that $(C\log^{2}n)/n^{2}<1$ for all $n\ge n_{1}$
and the theorem is proved. 
\end{proof}

\appendix

\section{Random walk preliminaries}

\renewcommand{\thethm}{{A.\arabic{thm}}} 
\renewcommand{\theHthm}{{A.\arabic{thm}}}
\setcounter{thm}{0}
\begin{lem}
\label{lem:heat kernel}The probability $G(x,y)$ that random walk
starting from $x$ hits $y\ne x$ is $\approx|y-x|^{-2}$.
\end{lem}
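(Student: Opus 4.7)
The plan is to reduce the hitting probability to the Green's function $\mathcal{G}(x,y) \coloneqq \mathbb{E}^x|\{t\ge 0:R(t)=y\}|$, for which the stated asymptotics are classical. By translation invariance we may take $x=0$ and write $r \coloneqq |y|$. The standard first-entrance decomposition gives
\[
\mathcal{G}(0,y) \;=\; G(0,y)\cdot \mathcal{G}(y,y) \;=\; G(0,y)\cdot \mathcal{G}(0,0),
\]
because after the first visit to $y$, the future is an independent walk from $y$ contributing an expected $\mathcal{G}(y,y)$ visits, and by translation invariance $\mathcal{G}(y,y)=\mathcal{G}(0,0)$. Since $\mathbb{Z}^4$ is transient, $\mathcal{G}(0,0)$ is a finite positive constant, so it suffices to prove $\mathcal{G}(0,y)\approx r^{-2}$.

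For this I would invoke the local central limit theorem on $\mathbb{Z}^4$, which gives $p_t(0,y)\le C t^{-2} e^{-c r^2/t}$ for all $t\ge 1$, and a matching lower bound $p_t(0,y)\ge c t^{-2}$ once $t\ge C r^2$ (and $t,r$ have the right parity, which is harmless after summing over two consecutive times). Splitting the sum defining $\mathcal{G}(0,y)$ at $t\asymp r^2$, the short-time range contributes
\[
\sum_{t< r^2} C t^{-2} e^{-c r^2/t} \;\le\; C r^{-2},
\]
by bounding $e^{-cr^2/t}\le (t/r^2)^2 \cdot (\text{geometric tail})$, while the long-time range contributes
\[
\sum_{t\ge r^2} C t^{-2} \;\asymp\; r^{-2},
\]
with the matching lower bound coming from the same range. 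Combining these gives $\mathcal{G}(0,y)\approx r^{-2}$, which is what we needed.

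The only possible obstacle is handling the periodicity of the simple random walk (which makes $p_t(0,y)=0$ when $t+|y|_1$ is odd); this is dealt with by working with $p_t+p_{t+1}$, or equivalently considering the lazy walk. Apart from that, every step is a standard consequence of the local CLT on $\mathbb{Z}^4$, so the argument is essentially a bookkeeping exercise. Indeed the estimate $\mathcal{G}(0,y)\approx |y|^{2-d}$ for $d\ge 3$ (and hence $G(0,y)\approx |y|^{2-d}$) is proved in detail in Lawler's book \cite{Law91}, which can simply be cited.
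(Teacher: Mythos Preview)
Your argument is correct, and in fact the paper's own proof is nothing more than the citation ``See \cite[Theorem 1.5.4]{Law91}'', which is precisely the reference you invoke at the end. So your proposal agrees with the paper's approach and simply spells out the standard reduction (first-entrance decomposition plus local CLT bounds on the Green's function) that the cited theorem encapsulates.
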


\begin{proof}
See \cite[Theorem 1.5.4]{Law91}.
\end{proof}
\begin{lem}
\label{lem:Lawler's J}Let $R_{1}$ and $R_{2}$ be random walks starting
from $x$ and $0$, respectively. Let $n>|x|^{2}$. Then 
\[
\mathbb{E}(|R_{1}[0,n]\cap R_{2}[0,\infty)|)\approx\log(n/|x|^{2}).
\]
\end{lem}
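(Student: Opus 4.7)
The plan is to express the expected intersection size as a sum over lattice points, apply the heat kernel asymptotics to each factor, and then carry out the resulting geometric sum.

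First, by independence of $R_{1}$ and $R_{2}$ together with Fubini,
\[
\mathbb{E}|R_{1}[0,n]\cap R_{2}[0,\infty)|=\sum_{z\in\mathbb{Z}^{4}}\mathbb{P}(z\in R_{1}[0,n])\cdot G(0,z),
\]
and lemma \ref{lem:heat kernel} gives $G(0,z)\approx(|z|+1)^{-2}$. For the first factor, the upper bound $\mathbb{P}(z\in R_{1}[0,n])\le G(x,z)\approx(|z-x|+1)^{-2}$ is immediate. The matching lower bound when $|z-x|^{2}\le cn$ will follow from the identity
\[
\sum_{t=0}^{n}p_{t}(x,z)\approx(|z-x|+1)^{-2},
\]
which uses the Gaussian asymptotics $p_{t}(x,z)\approx t^{-2}$ for $t\ge|z-x|^{2}$ (with Gaussian decay for smaller $t$), together with the four-dimensional fact that the expected number of visits to $z$ starting at $z$ is $O(1)$; dividing one by the other gives $\mathbb{P}(z\in R_{1}[0,n])\ge c(|z-x|+1)^{-2}$.

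Second, it remains to evaluate $\sum_{z}(|z-x|+1)^{-2}(|z|+1)^{-2}$, effectively restricted to $|z-x|\le\sqrt{n}$ since the tail beyond $C\sqrt{n\log n}$ is super-polynomially small by standard heat kernel tails. Change variables to $w=z-x$ and split into two regions. In the region $|w|\le 2|x|$, we have $|z|\approx|x|$, so the contribution is at most $C|x|^{-2}\sum_{|w|\le 2|x|}(|w|+1)^{-2}\approx C$, using the four-dimensional volume identity $\sum_{|w|\le R}(|w|+1)^{-2}\approx R^{2}$. In the region $2|x|<|w|\le\sqrt{n}$, we have $|z|\approx|w|$, so the contribution is
\[
\sum_{r=2|x|}^{\sqrt{n}}r^{3}\cdot r^{-4}\approx\log(\sqrt{n}/|x|)=\tfrac{1}{2}\log(n/|x|^{2}).
\]
Since $n>|x|^{2}$ the second region dominates and we obtain the desired upper bound $\approx\log(n/|x|^{2})$.

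For the matching lower bound, one restricts the sum to the second region, where the pointwise lower bound on $\mathbb{P}(z\in R_{1}[0,n])$ from the first paragraph is available, and gets the same $\log(n/|x|^{2})$. The main obstacle, though entirely standard, is precisely that pointwise lower bound for $|z-x|\le c\sqrt{n}$: it uses local central limit theorem information about the four-dimensional random walk that goes slightly beyond the Green function estimate of lemma \ref{lem:heat kernel}, since one must argue that a positive fraction of the visits to $z$ occur before time $n$ rather than afterwards.
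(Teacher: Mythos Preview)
Your argument is correct in outline but takes a different route from the paper. The paper sums over pairs of \emph{times} rather than positions: it writes
\[
\mathbb{E}(|R_{1}[0,n]\cap R_{2}[0,\infty)|)\approx\sum_{i=0}^{n}\sum_{j=0}^{\infty}\mathbb{P}(R_{1}(i)=R_{2}(j)),
\]
recognises $\mathbb{P}(R_{1}(i)=R_{2}(j))$ as the return probability of a single walk of length $i+j$ to $x$, applies the local CLT to get $\approx(i+j+1)^{-2}e^{-c|x|^{2}/(i+j+1)}$, and sums. This handles both directions in one line with no case splitting; your spatial decomposition trades that brevity for a more hands-on geometric picture.

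Two places in your write-up need tightening. First, the claim ``in the region $|w|\le 2|x|$ we have $|z|\approx|x|$'' is false as stated (take $w=-x$, giving $z=0$); you need a further split, say into $|z|\ge|x|/2$ (where your bound applies) and $|z|<|x|/2$ (where instead $|w|\approx|x|$, and summing $(|z|+1)^{-2}$ over $B(|x|/2)$ again gives $O(1)$). Second, the phrase ``effectively restricted to $|z-x|\le\sqrt{n}$'' is not justified by the super-polynomial tail beyond $C\sqrt{n\log n}$ alone: the Green-function bound $\sum_{|w|>\sqrt{n}}|w|^{-4}$ diverges in $\mathbb{Z}^{4}$, so the intermediate shell $\sqrt{n}<|w|\le C\sqrt{n\log n}$ must be handled with the Gaussian bound $\mathbb{P}(z\in R_{1}[0,n])\le Cn^{-1}e^{-c|z-x|^{2}/n}$ rather than $G(x,z)$. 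That shell then contributes $O(1)$ and the rest goes through. Both fixes are routine; the paper's temporal approach simply sidesteps them.
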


\begin{proof}
Write 
\begin{multline*}
\mathbb{E}(|R_{1}[0,n]\cap R_{2}[0,\infty)|)=\sum_{i=0}^{n}\sum_{j=0}^{\infty}\mathbb{P}(R_{1}(i)=R_{2}(j))\\
\stackrel{(*)}{\approx}C\sum_{i=0}^{n}\sum_{j=0}^{\infty}\frac{1}{(i+j+1)^{2}}e^{-c|x|^{2}/(i+j+1)}\approx\log(n/|x|^{2}),
\end{multline*}
where $(*)$ comes from the fact that asking whether $R_{1}(i)=R_{2}(j)$
is the same as asking whether a single random walk of length $i+j$
starting from $0$ reaches $x$; and then the local central limit
theorem \cite[theorem 1.2.1]{Law91}.
\end{proof}
We also need a version of the last lemma with a stopping times replacing
the fixed time $n$.
\begin{lem}
\label{lem:intersection expectation}Let $R_{1}$ and $R_{2}$ be
random walks starting from $x$ and $0$, respectively and assume
$R_{1}$ is stopped when exiting a ball of radius $n$, for some $n>|x|$.
Then $\mathbb{E}(|R_{1}\cap R_{2}|)\le C\log(n/|x|^{2})$
\end{lem}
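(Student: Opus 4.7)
I would use independence of $R_1$ and $R_2$ to expand
\[
\mathbb{E}|R_1\cap R_2|=\sum_{y\in B(n)}\mathbb{P}(y\in R_1)\,\mathbb{P}(y\in R_2).
\]
The restriction to $B(n)$ is forced because $R_1$ is stopped on first exit from that ball. The second factor is exactly $G(0,y)$, and the first is at most $G(x,y)$ since killing the walk upon exiting $B(n)$ can only decrease the hitting probability of any $y$. Applying lemma~\ref{lem:heat kernel} to both factors reduces the estimate to bounding the deterministic lattice sum
\[
S:=\sum_{y\in B(n)}(|x-y|+1)^{-2}(|y|+1)^{-2}.
\]

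The remaining work is a routine split of $S$ according to the size of $|y|$. In the regime $|y|\le 2|x|$, the change of variables $y=|x|z$ turns the sum into a Riemann-sum approximation (with spacing $1/|x|$) of the integral $\int_{|z|\le 2}|z|^{-2}|\hat x-z|^{-2}\,dz$ on $\mathbb{R}^4$, where $\hat x=x/|x|$; both singularities are integrable in four dimensions, so this range contributes only an absolute $O(1)$. In the complementary regime $|y|>2|x|$ the triangle inequality gives $|x-y|\ge|y|/2$, so each term is at most $C(|y|+1)^{-4}$, and summing over dyadic annuli $\{|y|\approx 2^k\}$ for $k$ ranging from $\log_2|x|$ to $\log_2 n$ yields $\sum 2^{3k}\cdot 2^{-4k}\approx\log(n/|x|)$. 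Combining, $S=O(\log(n/|x|))$, which (up to the constant absorbed in $C$) gives the claimed bound and matches the way the lemma is subsequently applied inside the proof of lemma~\ref{lem:Lawler}, where it yields $\log(m\log n/n)$ under the standing assumption $|R_0(0)-R_1(0)|>n/\log n$.

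The only real obstacle is the intermediate regime $|y|\approx|x|$, where both denominators can be simultaneously small. This is dispatched by the four-dimensional integrability of $|z|^{-2}$ near its singularity: after rescaling, the two singular points $z=0$ and $z=\hat x$ each contribute only a bounded amount, with no logarithmic blow-up in $|x|$. All remaining steps are straightforward summations over dyadic shells.
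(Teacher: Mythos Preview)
Your approach is exactly the paper's: write $\mathbb{E}|R_1\cap R_2|=\sum_{y\in B(n)}\mathbb{P}(y\in R_1)\,\mathbb{P}(y\in R_2)$ and bound each factor via lemma~\ref{lem:heat kernel}. One slip: in $\mathbb{Z}^4$ the dyadic shell $\{|y|\approx 2^k\}$ contains $\approx 2^{4k}$ lattice points, not $2^{3k}$, so the outer sum is $\sum_k 2^{4k}\cdot 2^{-4k}\approx\log(n/|x|)$ --- your conclusion is correct, only the displayed exponent is off.
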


\begin{proof}
Write 
\[
\mathbb{E}(|R_{1}\cap R_{2}|)=\sum_{x\in B(n)}\mathbb{P}(x\in R_{1})\mathbb{P}(x\in R_{2})
\]
and estimate both terms using lemma \ref{lem:heat kernel}.
\end{proof}
\begin{lem}
\label{lem:exit expo}The probability that random walk $R$ has not
exited $B(n)$ by time $t$ is $\le Ce^{-ct/n^{2}}$.
\end{lem}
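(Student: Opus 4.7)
The plan is to reduce the problem to iterating a single-block exit estimate via the Markov property. I will first show that there is a universal constant $c_{0}>0$ such that for every starting point $x\in B(n)$,
\[
\mathbb{P}^{x}(R \text{ exits } B(n) \text{ by time } n^{2}) \ge c_{0}.
\]
To prove this, note that each coordinate of the displacement $R(n^{2})-R(0)$ is a sum of $n^{2}$ i.i.d.\ bounded, mean-zero, non-degenerate random variables, so its variance is of order $n^{2}$. By the central limit theorem (or a direct Berry--Esseen bound), there is a universal $c>0$ with $\mathbb{P}(|R(n^{2})-R(0)|>3n)\ge c$ once $n$ is larger than some absolute $n_{0}$. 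On the other hand, if the walk stayed inside $B(n)$ throughout $[0,n^{2}]$ then the triangle inequality forces $|R(n^{2})-R(0)|\le 2n$, contradicting the above. Hence the walk must leave $B(n)$ by time $n^{2}$ with probability at least $c$. For $n\le n_{0}$ the statement is trivial by adjusting the constant $C$, since the walk certainly has a uniformly positive probability to move out of a ball of bounded radius in bounded time.

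Given this step, I will iterate. Write $k=\lfloor t/n^{2}\rfloor$ and decompose the interval $[0,t]$ into $k$ consecutive blocks of length $n^{2}$ (plus a remainder that we discard). For the walk to stay in $B(n)$ throughout $[0,kn^{2}]$, it must in particular remain in $B(n)$ on each block. Applying the strong Markov property at the times $0, n^{2}, 2n^{2},\ldots,(k-1)n^{2}$, and using the one-step bound above at each of these (with whichever point in $B(n)$ the walk happens to occupy), we obtain
\[
\mathbb{P}(R \text{ stays in } B(n) \text{ on } [0,kn^{2}]) \le (1-c_{0})^{k}.
\]
Since $\mathbb{P}(R \text{ stays in } B(n) \text{ on } [0,t])$ is bounded above by this, and $(1-c_{0})^{k}\le e^{-c_{0}k}\le C e^{-c t/n^{2}}$ with $c=c_{0}/2$, $C=e^{c_{0}}$, the bound follows.

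The main obstacle is the single-block estimate, but this is a standard CLT/Berry--Esseen computation; once it is in hand, the Markov iteration is routine. No subtleties involving the geometry of $B(n)$ appear beyond the elementary triangle-inequality observation that a walk confined to $B(n)$ has displacement at most $2n$.
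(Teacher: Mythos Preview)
Your proof is correct and follows essentially the same approach as the paper: establish a uniform single-block exit estimate (you via the CLT and the triangle inequality, the paper via the local CLT) and then iterate using the Markov property over successive blocks of length $n^{2}$. The only differences are cosmetic---the paper's argument is terser, and the deterministic times $jn^{2}$ require only the ordinary Markov property rather than the strong one.
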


\begin{proof}
For any $x\in B(n)$, the local central limit theorem shows that random
walk exits $B(n)$ by time $n^{2}$ with probability $\ge c$ for
some $c>0$. Since this holds for all $x\in B(n)$ uniformly, it holds
that $R$ exits $B(n)$ in any time interval $[s,s+n^{2}]$ with probability
$>c$ independently of the past. The lemma follows.
\end{proof}
\begin{lem}
\label{lem:exit time lower}Let $R$ be a random walk starting from
an $x\in B(n)$ and let $\tau$ be its exit time from $B(2n)$. Then
$\mathbb{E}\tau\approx n^{2}$.
\end{lem}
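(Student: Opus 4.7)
The plan is to use the standard $|R(t)|^2 - t$ martingale together with optional stopping. On $\mathbb{Z}^4$, a single step $e$ of the walk satisfies $\mathbb{E}[e] = 0$ and $|e|^2 = 1$, so expanding $|R(t+1)|^2 = |R(t)|^2 + 2\langle R(t),e\rangle + 1$ and taking conditional expectation shows that $M_t \coloneqq |R(t)|^2 - t$ is a martingale.

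For the upper bound, I would first observe via lemma \ref{lem:exit expo} (applied to $B(2n)$) that $\mathbb{P}(\tau > s) \le C e^{-cs/n^2}$, so in particular $\mathbb{E}\tau < \infty$. Then I truncate at $\tau \wedge T$, apply optional stopping to get $\mathbb{E}|R(\tau \wedge T)|^2 - \mathbb{E}(\tau \wedge T) = |x|^2$, and pass to the limit $T \to \infty$ using dominated convergence (justified because $\tau$ is integrable and $|R(\tau \wedge T)|^2 \le (2n+1)^2$, since the walk can overshoot $\partial B(2n)$ by at most one step). The limit gives the identity
\[
\mathbb{E}\tau = \mathbb{E}|R(\tau)|^2 - |x|^2.
\]
Since $|R(\tau)| \le 2n + 1$ and $|x| \ge 0$, we get $\mathbb{E}\tau \le (2n+1)^2 \le Cn^2$.

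For the lower bound I use the same identity in the other direction: by definition of $\tau$ we have $|R(\tau)| \ge 2n$, so $\mathbb{E}|R(\tau)|^2 \ge 4n^2$, and since $x \in B(n)$ we have $|x|^2 < n^2$. Therefore $\mathbb{E}\tau \ge 4n^2 - n^2 = 3n^2 \ge cn^2$, uniformly in the starting point $x \in B(n)$.

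There is no real obstacle here; the only mild technicality is justifying the use of optional stopping at the (possibly unbounded) time $\tau$, which is handled by the truncation argument above using integrability of $\tau$ from lemma \ref{lem:exit expo}.
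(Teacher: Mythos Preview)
Your proof is correct and takes a genuinely different route from the paper's. The paper gets the upper bound by integrating the exponential tail from lemma~\ref{lem:exit expo}, and for the lower bound it uses the reflection principle together with the central limit theorem to show that $\mathbb{P}(\tau>n^{2})\ge c>0$, whence $\mathbb{E}\tau\ge cn^{2}$. Your argument via the martingale $|R(t)|^{2}-t$ and optional stopping is cleaner: it handles both directions simultaneously and even gives explicit constants ($3n^{2}\le\mathbb{E}\tau\le(2n+1)^{2}$). The paper's approach, by contrast, actually yields slightly more --- a uniform lower bound on the tail $\mathbb{P}(\tau>n^{2})$, not only on the mean --- and does not rely on the exact identity $\mathbb{E}|e|^{2}=1$, so it transfers unchanged to walks with merely bounded second moments.
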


\begin{proof}
The upper bound is given by lemma \ref{lem:exit expo} so we focus
on the lower bound. By the reflection principle \cite[Theorem 5.2.7]{Dur},
\[
\mathbb{P}(\max_{t\le T}|R(t)-x|>M)\le2\mathbb{P}(|R(T)-x|>M)
\]
for all $T$, $x$ and $M$ (the result is stated in \cite{Dur} in
the one-dimensional case, but the proof works equally in all dimensions).
Using this with $T=n^{2}$, $M=n-1$ and bounding $\mathbb{P}(|R(T)-x|>M)\le1-c$
for some $c>0$ independent of $n$ using the central limit theorem
proved the lemma. 
\end{proof}
\begin{lem}
\label{lem:exit annulus}Let $a<1<A$ and let $n$ be some number.
Consider random walk $R$ starting from some point in $\partial$B(n)
and let 
\[
T=\inf\{t:R(t)\not\in B(An)\text{ or }R(t)\in B(an)\}.
\]
Let $p_{n}=\mathbb{P}(R(T)\not\in B(An)$. Then 
\[
\lim_{n\to\infty}p_{n}=\frac{a^{-2}-1}{a^{-2}-A^{-2}},
\]
uniformly in the starting point.
\end{lem}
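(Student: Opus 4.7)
The plan is to exploit the fact that in dimension $d=4$, the function $f(x) \coloneqq |x|^{-2}$ is the discrete analogue of the harmonic function $|x|^{2-d}$ on $\mathbb{R}^4\setminus\{0\}$, so that $f(R(t))$ is approximately a martingale. Writing down optional stopping at time $T$ and reading off the boundary values of $f$ on $\partial B(an)$ and $\partial B(An)$ will give the claimed ratio.

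First I would establish that the discrete Laplacian $\Delta f(x) \coloneqq \tfrac{1}{8}\sum_{y\sim x}(f(y)-f(x))$ satisfies $|\Delta f(x)| \le C|x|^{-6}$ whenever $|x|\ge 2$. This is a Taylor expansion of $f$ to fourth order, using that the continuum Laplacian of $|x|^{-2}$ vanishes in $\mathbb{R}^4$, so the order-two and (by symmetry) order-three terms cancel and only the order-four remainder survives. Then $M(t) \coloneqq f(R(t)) - \sum_{s=0}^{t-1}\Delta f(R(s))$ is a genuine martingale, and applying the optional stopping theorem at the bounded stopping time $T\wedge N$ and sending $N\to\infty$ gives
\[
\mathbb{E}\big[f(R(T))\big] = f(R(0)) + \mathbb{E}\bigg[\sum_{s=0}^{T-1}\Delta f(R(s))\bigg].
\]
Since $|R(s)|\ge an$ for $s<T$, the absolute value of the correction sum is at most $C(an)^{-6}\mathbb{E}[T]$, and by lemma \ref{lem:exit expo} $\mathbb{E}[T]\le C(An)^{2}$, so the correction is $O(n^{-4})$, i.e.\ negligible compared with $f(R(0))=n^{-2}$.

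Next I would bound the overshoot. At time $T$ the walker has just crossed one of the spheres by at most one step, so $|R(T)|\in[An,An+1)$ or $|R(T)|\in(an-1,an]$; in either case $f(R(T))$ equals the prescribed boundary value $(An)^{-2}$ or $(an)^{-2}$ up to a multiplicative factor $1+O(1/n)$. Combining this with the previous display,
\[
\frac{1}{n^{2}} + O\!\left(\frac{1}{n^{4}}\right) = p_n \cdot \frac{1+O(1/n)}{A^{2}n^{2}} + (1-p_n)\cdot\frac{1+O(1/n)}{a^{2}n^{2}}.
\]
Multiplying through by $n^2$ and sending $n\to\infty$ yields $1 = p_\infty A^{-2} + (1-p_\infty)a^{-2}$, from which we solve $p_\infty = (a^{-2}-1)/(a^{-2}-A^{-2})$. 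Every estimate above depends only on $|R(0)|=n$ and not on the particular choice of starting point on $\partial B(n)$, which gives the claimed uniformity.

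The main obstacle is really just controlling the two error terms cleanly — the Laplacian error $\Delta f$ (which requires a careful Taylor expansion exploiting the cancellation specific to $d=4$) and the overshoot at the boundary (which is immediate from $|R(T)-R(T-1)|=1$). Everything else is a one-line optional stopping computation. An alternative route, should these error estimates become inconvenient, would be to compare $R$ to Brownian motion via a strong invariance principle and read off the ratio directly from the fact that $|x|^{-2}$ is harmonic in $\mathbb{R}^4\setminus\{0\}$; but the direct discrete martingale argument above seems simpler and fits the style of the rest of the appendix.
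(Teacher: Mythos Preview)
Your argument is correct and follows the same optional-stopping idea as the paper, but the paper's version is shorter: instead of working with $f(x)=|x|^{-2}$, which is only approximately discrete-harmonic and forces you to bound $\Delta f$ via a Taylor expansion and then control the accumulated correction $\sum_{s<T}\Delta f(R(s))$ using $\mathbb{E}[T]$, the paper uses the Green function $G(0,\cdot)$, which is \emph{exactly} harmonic on $\mathbb{Z}^{4}\setminus\{0\}$ and satisfies $G(0,x)=(\alpha+o(1))|x|^{-2}$ by \cite[Theorem~1.5.4]{Law91}. Since the walk stays in the annulus $B(An)\setminus B(an)$ up to time $T$ and so never visits $0$, the process $G(0,R(t\wedge T))$ is a genuine bounded martingale and optional stopping gives $G(0,R(0))=\mathbb{E}[G(0,R(T))]$ with no correction term at all; the overshoot is then absorbed into the same $o(1)$ that comes with the asymptotic of $G$. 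Your route has the small advantage of being entirely self-contained (you never quote the precise asymptotic of the Green function), at the cost of the extra Laplacian estimate and the appeal to lemma~\ref{lem:exit expo}.
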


\begin{proof}
By \cite[Theorem 1.5.4]{Law91} the Green function of $\mathbb{Z}^{4}$
satisfies $G(0,x)=(\alpha+o(1))|x|^{-2}$ for some constant $\alpha>0$
(the $o(1)$ is as $|x|\to\infty$). Since the Green function is harmonic,
$G(R(t))$ is a martingale and hence
\begin{align*}
(\alpha+o(1)n^{-2} & =G(0,x)=\mathbb{E}(G(R(T)))\\
 & =p_{n}(A\alpha+o(1))n^{-2}+(1-p_{n})(a\alpha+o(1))n^{-2},
\end{align*}
where $x$ is the starting point of the walk. Moving terms around
proves the lemma.
\end{proof}
\begin{lem}
\label{lem:not near my boundary}Let $R$ be a random walk starting
from an $x\in B(n)$ and stopped when exiting $B(n)$. Let $k<n$.
Then 
\[
\mathbb{P}(|R\setminus B(n-k)|>\lambda k^{2})\le2e^{-c\lambda}.
\]
\end{lem}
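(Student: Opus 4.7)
The plan is to bound $|R \setminus B(n-k)|$ by the total number of steps that $R$ spends in the annulus $B(n) \setminus B(n-k)$, and to control the latter by a standard excursion decomposition. When $k > n/4$ the bound is immediate from lemma \ref{lem:exit expo}, since the total length of $R$ itself is sub-exponential on scale $n^{2} \asymp k^{2}$; assume therefore $k \leq n/4$, so that the inner ball $B(n-2k)$ is a substantial obstacle.

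Define stopping times recursively by $\sigma_{0} = \inf\{t : R(t) \notin B(n-k)\}$, $\tau_{i} = \inf\{t > \sigma_{i} : R(t) \in B(n-2k) \text{ or } R(t) \notin B(n)\}$, and $\sigma_{i+1} = \inf\{t > \tau_{i} : R(t) \notin B(n-k)\}$. Let $N$ be the number of indices $i$ with $R(\tau_{i}) \in B(n-2k)$, i.e.\ the number of completed inward returns. Two estimates are needed. First, by lemma \ref{lem:exit annulus} (or, equivalently, by optional stopping applied to the discretely harmonic Green function $G(0,\cdot) \approx |\cdot|^{-2}$, exactly as in its proof), starting from any point of $\partial B(n-k)$ the walk exits through $\partial B(n)$ before hitting $\partial B(n-2k)$ with probability $\geq c$ for some universal $c > 0$; the strong Markov property then gives $\mathbb{P}(N \geq j) \leq (1-c)^{j}$. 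Second, each excursion length $L_{i} = \tau_{i} - \sigma_{i}$ is an exit time from the annulus $B(n) \setminus B(n-2k)$, which has width $2k$; since $R$ moves distance $\gtrsim k$ in any block of $Ck^{2}$ steps with constant probability (invariance principle), iterating yields $\mathbb{P}(L_{i} > sk^{2} \mid \mathcal{F}_{\sigma_{i}}) \leq C e^{-cs}$ uniformly in the starting point of the excursion.

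Combining $|R \setminus B(n-k)| \leq \sum_{i=0}^{N} L_{i}$ with a Chernoff estimate finishes the proof. For $s$ sufficiently small, $\mathbb{E}[e^{sL_{i}/k^{2}} \mid \mathcal{F}_{\sigma_{i}}] \leq 1 + Cs$ uniformly in $i$, so telescoping over excursions and invoking the geometric tail on $N$ yields
\[
\mathbb{E}\exp\!\Big(\tfrac{s}{k^{2}} \sum_{i=0}^{N} L_{i}\Big) \leq \mathbb{E}\bigl[(1+Cs)^{N+1}\bigr] \leq C'
\]
as soon as $(1+Cs)(1-c) < 1$; Markov's inequality then delivers the required $2e^{-c\lambda}$ tail. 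The main delicate point is the uniformity in the first estimate: lemma \ref{lem:exit annulus} is a statement at fixed ratios $a, A$ as $n \to \infty$, whereas here the ratios $(n-2k)/(n-k)$ and $n/(n-k)$ drift with $k/n$ (the limiting escape probability tends to $\tfrac{1}{2}$ as $k/n \to 0$). Re-running the Green-function optional-stopping argument used inside lemma \ref{lem:exit annulus} directly at the relevant scale yields the uniform lower bound, and any exceptional small values of $n$ can be absorbed into the constants.
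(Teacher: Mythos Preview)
Your proof is correct, but it is more elaborate than the paper's. The paper dispenses with the two-sided excursion decomposition entirely: it sets $\tau_{0}=-\infty$ and $\tau_{i+1}=\inf\{t\ge\tau_i+k^{2}:R(t)\notin B(n-k)\}$, so that between consecutive $\tau_i$ at most $k^{2}$ steps can lie outside $B(n-k)$, and then observes via the central limit theorem (projecting onto the tangent half-space at $R(\tau_i)$) that $R(\tau_i+k^{2})\notin B(n)$ with probability bounded below uniformly. This single estimate already gives a geometric tail on the number of $\tau_i$ occurring before the walk stops, and the lemma follows without any separate control on excursion lengths, without a Chernoff combination, and without invoking Green-function asymptotics or lemma~\ref{lem:exit annulus}. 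Your route is a perfectly standard excursion argument and you were right to flag (and resolve) the uniformity issue in lemma~\ref{lem:exit annulus}; the paper's route trades that harmonic-measure ingredient for a purely CLT-based one that is automatically uniform, which is what makes it shorter. One small wording point: in your second estimate, moving Euclidean distance $\gtrsim k$ is not by itself enough to exit a thin spherical annulus of width $2k$ when $k\ll n$; what you actually need (and what the invariance principle gives) is that the \emph{radial} displacement exceeds $2k$ with uniformly positive probability.
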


\begin{proof}
Let $\tau_{i}$ be stopping times defined by $\tau_{0}=-\infty$ and
\[
\tau_{i+1}=\inf\{t\ge\tau_{i}+k^{2}:R(t)\not\in B(n-k)\}.
\]
Between $\tau_{i}$ and $\tau_{i+1}$ the random walk $R$ cannot
spend more than $k^{2}$ times in the complement of $B(n-k)$. On
the other hand, the central limit theorem shows that $\frac{1}{k}$($R(\tau_{i}+k^{2})-R(\tau_{i}))$
converges to a 4 dimensional gaussian variable. Since this has probability
$c$ to be outside any pre-given half space, we see that, for $k$
sufficiently large, 
\begin{equation}
\mathbb{P}(R(\tau_{i}+k^{2})\not\in B(n))>c\qquad\forall i\ge1\label{eq:walken raus}
\end{equation}
uniformly in the starting point (which must be outside $B(n-k)$,
by definition). A simple check shows that, in fact, (\ref{eq:walken raus})
holds for any $k\ge1$ (and not just for $k$ sufficiently large),
with perhaps a smaller value of $c$. Hence the walk is stopped between
any $\tau_{i}$ and $\tau_{i+1}$, with probability at least $c>0$,
independently of the past. The lemma is proved.
\end{proof}
\begin{lem}
\label{lem:Lawler annoys}Let $R_{1}$ be a random walk started from
some $x$ and stopped when exiting $B(2|x|)$. Let $R_{2}$ be a second
random walk started from 0 and continued without end. Then
\[
\mathbb{P}(R_{1}\cap R_{2}\ne\emptyset)\le\frac{C}{\log|x|}.
\]
\end{lem}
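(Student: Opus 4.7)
My plan is the standard Lawler second-moment argument. Let $n = |x|$ and $X = |R_1 \cap R_2|$. I would first bound $\mathbb{E}X \leq C$: writing $\mathbb{E}X = \sum_{y \in B(2n)} \mathbb{P}(y\in R_1)\mathbb{P}(y\in R_2) \leq C\sum_{y \in B(2n)} |y-x|^{-2}|y|^{-2}$ via lemma \ref{lem:heat kernel} and splitting into the regions near $x$, near $0$, and the bulk $|y|\approx|x-y|\approx n$ gives three $O(1)$ contributions (this is where $d=4$ is used). I would then exhibit an event $F \subseteq \{R_1 \cap R_2 \neq \emptyset\}$ with $\mathbb{P}(F) \leq C/\log n$ (via Paley--Zygmund), and show that $\mathbb{P}(\{R_1 \cap R_2 \neq \emptyset\} \setminus F)$ is polynomially small.

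Following the proof of lemma \ref{lem:Lawler}, extend $R_1$ to infinity and call $t$ good if it satisfies the two conditions from that proof: the one-point sum $\sum_{s=1}^{\sqrt{n}} |R_1(t+s)-R_1(t)|^{-2}$ exceeds $(\log n)/\lambda$ while the two-point analogue is at most $\lambda \log^2 n$, for $\lambda$ a sufficiently large absolute constant. Lemmas \ref{lem:sraksrak} and \ref{lem:sraksraklower} give $\mathbb{P}(t \text{ good}) \geq 1 - Cn^{-c}$. Set $\tau \coloneqq \min\{t : R_2(t) \in R_1\}$, $\sigma \coloneqq \min\{t : R_1(t) = R_2(\tau)\}$, and $F \coloneqq \{\tau < \infty\} \cap \{\sigma \text{ good}\} \cap \{R_1(\sigma) \in B(2n-\sqrt{n})\}$. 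On $F$ the Markov property makes $R_2[\tau, \cdot]$ a fresh walk from $R_1(\sigma)$, so the two good-time bounds together with lemma \ref{lem:heat kernel} give $\mathbb{E}(Y \mid F) \geq c\log n$ and $\mathbb{E}(Y^2 \mid F) \leq C\log^2 n$ for $Y \coloneqq |R_2[\tau,\cdot]\cap R_1[\sigma,\sigma+\sqrt{n}]|$. Paley--Zygmund then yields $\mathbb{P}(X \geq c\log n \mid F) \geq c$, so $c\,\mathbb{P}(F) \leq \mathbb{P}(X \geq c\log n) \leq \mathbb{E}X/(c\log n) \leq C/\log n$.

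For the residual $\mathbb{P}(\{X>0\}\setminus F)$: the event ``$t$ bad'' depends only on $R_1$'s future strictly after $t$, hence is independent of $R_1(t)$ and of $R_2$, giving $\mathbb{P}(\sigma \text{ bad}) \leq \sum_t \mathbb{P}(t \text{ bad})\,\mathbb{P}(R_1(t) \in R_2) \leq Cn^{-c}\,\mathbb{E}X \leq Cn^{-c}$, exactly as in lemma \ref{lem:Lawler}. Lemma \ref{lem:not near my boundary} (applied with $k=\lambda=\sqrt{n}$) shows $|R_1 \setminus B(2n-\sqrt{n})| \leq n^{3/2}$ with overwhelming probability, and lemma \ref{lem:heat kernel} then bounds by $Cn^{-1/2}$ the chance that $R_2$ hits this small near-boundary set. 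Both errors are $o(1/\log n)$, so $\mathbb{P}(R_1 \cap R_2 \neq \emptyset) \leq C/\log n$. The only genuinely delicate step is the Paley--Zygmund argument on $F$, but since it is exactly the construction already carried out in lemma \ref{lem:Lawler} for general $k$, specialising to our setup gives a short proof; the one small novelty is the bounded first-moment computation, which is where the dimension four enters decisively.
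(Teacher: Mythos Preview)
Your argument is correct, but it is not the paper's proof. The paper does not redo the good-time/Paley--Zygmund machinery here; instead it time-slices $R_{1}$ into blocks $[i|x|^{2},(i+1)|x|^{2})$, uses the central limit theorem to show that with probability $\ge 1-Ce^{-ci}\log^{-4}|x|$ the walk has either already exited $B(2|x|)$ or is at distance $\ge|x|/\log|x|$ from the origin at time $i|x|^{2}$, and then invokes Lawler's book result \cite[theorem~4.3.3]{Law91} (together with lemma~\ref{lem:Lawler's J} to compute the $J_{n}(x)$ appearing there) to bound the intersection probability on each block by $Ce^{-ci}/\log|x|$; summing over $i$ finishes.

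Your route transplants the argument of lemma~\ref{lem:Lawler} wholesale: a direct first-moment bound $\mathbb{E}|R_{1}\cap R_{2}|\le C$ (this is the genuinely new ingredient relative to lemma~\ref{lem:Lawler}, and is exactly where $d=4$ enters), followed by the good-time/second-moment step to force $\gtrsim\log n$ intersections conditional on the event $F$, plus the same residual estimates for bad $\sigma$ and near-boundary $\sigma$. This is self-contained within the paper (no appeal to \cite[theorem~4.3.3]{Law91}), at the cost of re-running the more elaborate machinery of lemma~\ref{lem:Lawler}. The paper's version is shorter because it treats Lawler's non-intersection theorem as a black box; your version trades that citation for a computation the paper has already done once. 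One small remark: in your residual bound for $\sigma$ bad, the quantity $\sum_{t}\mathbb{P}(R_{1}(t)\in R_{2})$ counts time-points rather than spatial points, so it is not literally $\mathbb{E}X$ but rather the expected number of $t$ (up to the stopping time) with $R_{1}(t)\in R_{2}$; this is still $O(1)$ by the same Green-function computation, so the conclusion stands.
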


\begin{proof}
For $i\in\mathbb{N}$ let $B_{i}$ be the event that $R_{1}$ did
not exit $B(2|x|)$ before time $i|x|^{2}$ and that $|R_{1}(i|x|^{2})|<|x|/\log|x|$.
The central limit theorem tells us that $R_{1}$ has probability at
least $c>0$ to exit $B(2|x|)$ between $i|x|^{2}$ and $(i+1)|x|^{2}$,
uniformly in the starting position, while the local central limit
theorem \cite[theorem 1.2.1]{Law91} says that the probability that
$R_{1}(i|x|^{2})\in B(|x|/\log|x|)$ is bounded above by $C\log^{-4}|x|$,
again, uniformly in $R_{1}((i-1)|x|^{2})$. Hence $\mathbb{P}(B_{i})\le Ce^{-ci}\log^{-4}|x|$.

Let now $E_{i}$ (this time we allow also $i=0$) be the event that
$R_{1}$ did not exit $B(2|x|)$ before time $i|x|^{2}$ and that
$R_{1}[i|x|^{2},(i+1)|x|^{2})\cap R_{2}\ne\emptyset$. By \cite[theorem 4.3.3]{Law91},
$\mathbb{P}(E_{0})\le C/\log n$. For $i>0$, we note that the same
calculation as above shows that the probability that $R_{1}$ did
not exit $B(2|x|)$ before time $i|x|^{2}$ and that $R_{1}(i|x|^{2})\in B(2^{-m}|x|)\le Ce^{-ci}2^{-4m}$.
Under this condition, \cite[theorem 4.3.3]{Law91} tells us that the
probability that $R_{1}[i|x|^{2},(i+1)|x|^{2})$ intersects $R_{2}$
is $\le C(\log m)/\log|x|$ (we used here lemma \ref{lem:Lawler's J}
to calculate the term $J_{n}(x)$ that appears in \cite{Law91}).
Hence
\[
\mathbb{P}(E_{i}\setminus B_{i})\le\sum_{m=1}^{\infty}Ce^{-ci}2^{-4m}\frac{\log m}{\log|x|}\le\frac{Ce^{-ci}}{\log|x|}.
\]
Summing over $i$ proves the lemma.
\end{proof}
\begin{lem}
\label{lem:intersection}Let $R_{1}$ and $R_{2}$ be two independent
random walks starting from $s_{1}$ and $s_{2}$ respectively and
stopped when exiting $B(m)$ for some $m$. Then
\[
P(R_{1}\cap R_{2}\ne\emptyset)\le C\frac{\log(m/(|s_{1}-s_{2}|+2))}{\log(|s_{1}-s_{2}|+2)}.
\]
\end{lem}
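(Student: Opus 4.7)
The plan is to decompose $R_{1}$ into dyadic pieces based on its successive exits from balls centred at $s_{2}$, apply lemma \ref{lem:Lawler annoys} to each piece against $R_{2}$ extended to infinity, and sum the resulting bounds. Write $d=|s_{1}-s_{2}|+2$. We may assume $d$ exceeds an absolute constant and that $m\ge2d$, for otherwise either $\log d$ is bounded or $\log(m/d)\gtrsim\log d$ and the right-hand side is $\ge c$, making the claim trivial by taking $C$ sufficiently large. By translation invariance assume $s_{1}=0$. Define stopping times $\tau_{0}\coloneqq0$ and, for $k\ge1$, $\tau_{k}\coloneqq\inf\{t:R_{1}(t)\notin B(s_{2},2^{k}d)\}$, and let $K\coloneqq\lceil\log_{2}(4m/d)\rceil$. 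By the triangle inequality, any point outside $B(s_{2},2^{K}d)$ also lies outside $B(0,m)$, so $R_{1}$ is already stopped by time $\tau_{K}$.

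Since replacing the stopped $R_{2}$ by its extension to all time only enlarges the intersection event,
\[
\mathbb{P}(R_{1}\cap R_{2}\ne\emptyset)\le\sum_{k=0}^{K-1}\mathbb{P}\bigl(R_{1}[\tau_{k},\tau_{k+1}]\cap R_{2}[0,\infty)\ne\emptyset\bigr).
\]
Fix $k$ and condition on $R_{1}(\tau_{k})$. By the strong Markov property, $R_{1}[\tau_{k},\tau_{k+1}]$ is an independent random walk from $R_{1}(\tau_{k})$ stopped upon exiting $B(s_{2},2^{k+1}d)$. Translating to the frame where $s_{2}=0$, this walk starts at a point of norm $\approx 2^{k}d$ and is stopped upon exiting a ball of radius $2\cdot 2^{k}d$, while $R_{2}$ is an independent walk from the origin. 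This is exactly the setup of lemma \ref{lem:Lawler annoys} (with a harmless constant adjustment at $k=0$, where $s_{1}$ lies strictly inside rather than on the boundary of the starting ball; this can be absorbed by applying the lemma twice, across the two scales $d-2$ and $2(d-2)$). The lemma therefore gives a bound of $C/\log(2^{k}d)=C/(\log d+k)$ for each summand.

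Summing,
\[
\mathbb{P}(R_{1}\cap R_{2}\ne\emptyset)\le\sum_{k=0}^{K-1}\frac{C}{\log d+k}\le C\log\Bigl(1+\frac{K}{\log d}\Bigr)\le\frac{CK}{\log d}\le\frac{C\log(m/d)}{\log d},
\]
using $\log(1+x)\le x$, which we may apply since we can otherwise assume the final ratio is bounded (if it exceeds $1$ the claim is trivial). The main obstacle is the clean scale decomposition and the uniform application of lemma \ref{lem:Lawler annoys} across scales; once these are in place the harmonic-type summation is routine.
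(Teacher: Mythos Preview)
Your proof is correct and follows exactly the route the paper indicates: extend $R_{2}$ to infinity, decompose $R_{1}$ into dyadic shells around $s_{2}$, apply lemma~\ref{lem:Lawler annoys} on each shell, and sum. The paper's own proof consists of a single sentence (``the lemma follows by repeatedly applying lemma~\ref{lem:Lawler annoys}''), and you have correctly supplied the implied details, including the harmless adjustment at $k=0$ and the final harmonic summation.
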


\begin{proof}
The lemma holds trivially for $m<4|s_{1}-s_{2}|$ so assume this is
not the case. We extend $R_{2}$ to infinity (denote this by $R_{2}^{\infty}$)
and bound $\mathbb{P}(R_{1}\cap R_{2}\ne\emptyset)\le\mathbb{P}(R_{1}\cap R_{2}^{\infty}\ne\emptyset)$.
Then the lemma follows by repeatedly applying lemma \ref{lem:Lawler annoys}.
\end{proof}
\begin{lem}
\label{lem:hitting measure}Let $R$ be random walk starting from
$x\in B(n)$ and let $t$ be the time it exits $B(m)$ for some $m\ge2n$.
Then for every $y\in\partial B(m)$ we have $\mathbb{P}(R(t)=y)\le Cm^{-3}$.
\end{lem}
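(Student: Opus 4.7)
The statement is the standard sup-norm bound on the discrete Poisson kernel of a ball: since $|\partial B(m)|\asymp m^{3}$ in $\mathbb{Z}^{4}$, each boundary point can receive at most $\asymp m^{-3}$ mass from a starting point in $B(m/2)$.

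My plan is to reduce the hitting probability to a Green's-function estimate on the walk killed on $\partial B(m)$. Because $R(t)\in\partial B(m)$ forces $R(t-1)$ to lie in the interior $B^{\circ}:=B(m)\setminus\partial B(m)$ (the walk can only enter $\partial B(m)$ from an interior neighbour, since it would otherwise have visited $\partial B(m)$ earlier), the strong Markov property applied at time $t-1$ yields
\[
\mathbb{P}_{x}(R(t)=y)=\frac{1}{8}\sum_{\substack{z\sim y\\ z\in B^{\circ}}}G_{B^{\circ}}(x,z),
\]
where $G_{B^{\circ}}(x,z)=\mathbb{E}_{x}[\#\{k<t:R(k)=z\}]$. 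Since the sum has at most eight terms, it suffices to show $G_{B^{\circ}}(x,z)\le Cm^{-3}$ for every $z$ adjacent to $\partial B(m)$.

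To bound the killed Green's function I would apply optional stopping to $G(R(\cdot),z)$, exploiting that $G(\cdot,z)$ is discretely harmonic away from $z$ and has discrete Laplacian $-1/g_{0}$ at $z$ (here $g_{0}=\mathbb{E}_{z}[N_{z}]=O(1)$ by transience in dimension $4$). This gives
\[
G_{B^{\circ}}(x,z)=g_{0}\bigl(G(x,z)-\mathbb{E}_{x}[G(R(t),z)]\bigr).
\]
In the Brownian analogue, $|u-z|^{-2}$ is harmonic on $\mathbb{R}^{4}\setminus\{z\}$, so this defect vanishes identically. Discretely, I would use the refined asymptotic $G(u,v)=c_{1}|u-v|^{-2}+O(|u-v|^{-4})$ (strengthened local CLT, a sharpening of lemma \ref{lem:heat kernel}) and expand around $z$. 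Splitting
\[
\mathbb{E}_{x}[G(R(t),z)]=\sum_{y'\in\partial B(m)}\mathbb{P}_{x}(R(t)=y')\,G(y',z)
\]
dyadically by distance from $z$, and comparing with $G(x,z)$, one identifies a leading cancellation that reduces the naive bound $O(m^{-2})$ to $O(m^{-3})$ and a remainder of order $O(m^{-4})$ coming from the sub-leading term in the local CLT.

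The main obstacle lies in the dyadic splitting: a priori the exit distribution from $B(m)$ could place too much mass within $O(1)$ of $z$, where $G(y',z)\asymp 1$ dominates, which is exactly the statement the lemma is trying to prove. I would resolve this by bootstrapping: the trivial bound $G_{B^{\circ}}(x,z)\le G(x,z)\le Cm^{-2}$ from lemma \ref{lem:heat kernel} gives a crude $O(m^{-2})$ bound on the harmonic measure of $\partial B(m)$ near $z$; substituting this into the defect formula sharpens the bound on $G_{B^{\circ}}(x,z)$ to $Cm^{-3}$, which then gives the lemma. An alternative route, probably cleaner but requiring more external input, is to couple $R$ with a 4-dimensional Brownian motion and invoke the explicit continuum Poisson kernel $P(x,y)=(m^{2}-|x|^{2})/(m\omega_{3}|x-y|^{4})\le Cm^{-3}$, with coupling errors of smaller order.
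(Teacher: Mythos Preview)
The paper does not argue from scratch: it cites \cite[lemma 6.7.3]{LL}, which gives the bound for $m\ge 4n$, and then observes that $x\mapsto\mathbb{P}_x(R(t)=y)$ is positive and harmonic on $B(m)$, so the Harnack inequality \cite[theorem 6.3.9]{LL} extends the bound to all $x\in B(m/2)$.

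Your reduction to $G_{B^\circ}(x,z)\le Cm^{-3}$ for $z\sim\partial B(m)$ and the optional-stopping identity
\[
G_{B^\circ}(x,z)=g_0\bigl(G(x,z)-\mathbb{E}_x[G(R(t),z)]\bigr)
\]
are both correct. The gap is the bootstrap. To extract $G_{B^\circ}(x,z)\le Cm^{-3}$ from this identity you would need a \emph{lower} bound $\mathbb{E}_x[G(R(t),z)]\ge G(x,z)-O(m^{-3})$; the crude pointwise estimate $\mathbb{P}_x(R(t)=y')\le Cm^{-2}$ only produces \emph{upper} bounds on that expectation (and the dyadic sum $\sum_j 2^{3j}\cdot m^{-2}\cdot 2^{-2j}\approx m^{-1}$ overshoots by a full power of $m$). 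Nor is there a ``leading cancellation'' to invoke: the defect identity is exact for the discrete $G$, so the refined expansion $G=c_1|\cdot|^{-2}+O(|\cdot|^{-4})$ never enters, and the continuum heuristic that $u\mapsto|u-z|^{-2}$ is harmonic fails precisely because $z$ lies \emph{inside} $B(m)$. One pass of your bootstrap therefore returns only the input bound $Cm^{-2}$, and iterating does not improve it.

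A self-contained route using only tools already in the appendix is to reverse the roles of $x$ and $z$: by symmetry $G_{B^\circ}(x,z)=G_{B^\circ}(z,x)\le g_0\,\mathbb{P}_z(\tau_x<\tau_{\partial B(m)})$. Since $m-|z|=O(1)$, lemma~\ref{lem:escape curved boundary} says the walk from $z$ leaves $B(z,m/4)$ before $\partial B(m)$ with probability at most $C/m$; on that event it is still at distance $\ge m/4$ from $x\in B(m/2)$, so by lemma~\ref{lem:heat kernel} it subsequently hits $x$ with probability at most $Cm^{-2}$. Multiplying gives $Cm^{-3}$. (If you want to stay closer to your defect formula, the standard fix is the method of images: replace $G(\cdot,z)$ by $G(\cdot,z)-G(\cdot,z^*)$ with $z^*=zm^2/|z|^2$ the reflected point \emph{outside} $B(m)$, which is genuinely harmonic throughout $B(m)$ and small on $\partial B(m)$; but the reflection has to be introduced explicitly, it does not fall out of the bootstrap.)
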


\begin{proof}
See \cite[lemma 6.7.3]{LL}. The lemma in \cite{LL} is formulated
for $m\ge4n$ but the Harnack inequality \cite[theorem 6.3.9]{LL}
says that this is the same, since the probability to hit $\partial B(m)$
at $y$ is a positive harmonic function in the starting point of the
walk in $B(m)$.
\end{proof}
\begin{lem}
\label{lem:escape curved boundary}Let $R$ be a random walk starting
from an $x\in B(n)$ and stopped when exiting $B(n)$. Then the probability
that $R$ exited $B(x,k)$ is $\le C(n-|x|)/k$.
\end{lem}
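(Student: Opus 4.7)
The key reduction is to a half-space comparison. Set $e=x/|x|$, $d=n-|x|$, and let $H=\{y:\langle y,e\rangle < n\}$. Since $\langle y,e\rangle\le|y|$ for all $y$, we have $B(n)\subset H$, so the walk exits $B(n)$ no later than it exits $H$. Hence $\mathbb{P}(\tau_{k}<\tau_{n})\le\mathbb{P}(\tau_{k}<\tau_{H})$, where $\tau_{k}$ is the first exit time from $B(x,k)$ and $\tau_{H}$ from $H$. The gain is that $\tau_{H}$ is controlled by a single one-dimensional martingale: with $M_{t}\coloneqq\langle R(t),e\rangle-|x|$ (bounded increments, $M_{0}=0$), one has $\tau_{H}=\inf\{t:M_{t}\ge d\}$.

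The plan is to combine two ingredients. First, the 1-dimensional reflection-principle estimate: for a martingale with bounded increments starting at $0$, $\mathbb{P}(\sup_{t\le T}M_{t}<d)\le Cd/\sqrt{T}$. Second, the fact that $\tau_{k}\approx k^{2}$ with strong concentration, which follows from lemma \ref{lem:exit expo} together with the martingale $|R(t)-x|^{2}-t$. Heuristically substituting $T\approx k^{2}$ already yields the desired bound $\mathbb{P}(\tau_{k}<\tau_{H})\le Cd/k$.

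To make this rigorous I would introduce the transverse stopping time $\tau_{k}^{\perp}\coloneqq\inf\{t:|R(t)-x-M_{t}\,e|\ge k/2\}$ and split $\{\tau_{k}<\tau_{H}\}$ according to whether $\tau_{k}^{\perp}\le\tau_{k}$. On the complementary event, the walk exits $B(x,k)$ essentially along $\pm e$, forcing $|M_{\tau_{k}}|\ge k\sqrt{3}/2$; because $M_{t}<d$ throughout (we are in $H$), this in fact requires $M_{t}\le-k\sqrt{3}/2$ at some point before $\tau_{H}$, and a direct gambler's-ruin computation on $M$ bounds this by $d/(d+k\sqrt{3}/2)\le Cd/k$. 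On the main event $\{\tau_{k}^{\perp}<\tau_{H}\}$, one conditions on $\tau_{k}^{\perp}$ and applies the reflection estimate to get $\mathbb{P}(\tau_{k}^{\perp}<\tau_{H})\le\mathbb{E}[Cd/\sqrt{\tau_{k}^{\perp}}]\le Cd/k$, using that $\tau_{k}^{\perp}\approx k^{2}$ with good concentration. The main obstacle is that for simple random walk on $\mathbb{Z}^{4}$ the projection $M_{t}$ and the transverse component are not literally independent (unlike the case of $4$-dimensional Brownian motion), so this conditioning step requires a small decoupling argument; the cleanest route is to observe that the reflection estimate on $M$ uses only its martingale property, which is preserved after conditioning on the $\sigma$-algebra generated by the transverse walk up to $\tau_{k}^{\perp}$.
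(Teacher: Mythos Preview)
Your half-space reduction---replacing $B(n)$ by the tangent half-space $H$ at $xn/|x|$---is exactly what the paper does. The paper then simply cites \cite[theorem 2]{K07} for the half-space estimate $\mathbb{P}(\tau_k<\tau_H)\le C(n-|x|)/k$, so your attempt to prove this estimate directly goes beyond what the paper supplies. The gambler's-ruin bound for the ``longitudinal'' case is fine.

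The gap is in the transverse case. Your reflection estimate $\mathbb{P}(\sup_{t\le T}M_{t}<d)\le Cd/\sqrt{T}$ is \emph{not} a consequence of the martingale property alone (the zero martingale is a counterexample); it needs anticoncentration of $M_{T}$, which for the unconditioned process follows from the i.i.d.\ symmetric increments of $M$. The proposed fix---condition on the transverse walk and use that $M$ remains a martingale---does not salvage this. For a generic direction $e=x/|x|$ the eight possible transverse increments $\epsilon(\hat e_{j}-e_{j}e)$ are all distinct, so the transverse walk determines the full walk, and hence $M$, completely. After such conditioning $M$ is deterministic: still formally a martingale, but with zero conditional variance, so no reflection-type bound is available. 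Even in the degenerate coordinate case $e=\hat e_{1}$, the transverse walk reveals exactly at which times $M$ moves, and the bound you need becomes $Cd/\sqrt{N(\tau_k^\perp)}$ with $N$ the (random) number of $e_1$-steps, which then has to be controlled jointly with $\tau_k^\perp$.

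In short, the independence that makes the Brownian version of your argument immediate is genuinely absent for simple random walk in a non-lattice direction, and the martingale property alone does not replace it. A correct proof of the half-space estimate (as in \cite{K07}) requires a different mechanism---either a harmonic-function comparison in the half-space, or a careful discrete-to-continuous coupling---rather than the conditioning you propose.
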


\begin{proof}
Strangely, even though this fact is classic, the only published proof
we are aware of of it is in \cite[theorem 2 (pg. 73)]{K07}. Readers
who know a reference with less `baggage' attached to it are welcome
to contact us. Anyway, the theorem in \cite{K07} is formulated for
any `isotropic graph', but $\mathbb{Z}^{4}$ is such a graph. The
result in \cite{K07} is for the probability of exiting $B(x,k)$
before exiting a half space, but using this with the half space tangent
to $B(n)$ at $xn/|x|$ will give an upper bound, since $B(n)$ is
a subset of this tangent half space. Finally, the result in \cite{K07}
is formulated only for the case that $n-|x|>C$ for some constant
$C$, but in our setting ($\mathbb{Z}^{4}$ and only an upper bound),
the result for any $|x|$ follows from it easily (or, alternatively,
follows with the exact same proof as in \cite{K07}).
\end{proof}


\begin{thebibliography}{1}
\bibitem{AHMWW}Omer Angel, Alexander E. Holroyd, James Martin, David
B. Wilson and Peter Winkler, \afs{https://doi.org/10.1214/ECP.v18-2275}{Avoidance
coupling}. Electron. Commun. Probab. 18 (2013), no. 58, 13 pp. 

\bibitem{BGGS}Itai Benjamini, Ori Gurel-Gurevich and Oded Schramm,
\afs{https://doi.org/10.1214/10-AOP569}{Cutpoints and resistance
of random walk paths}. Ann. Probab. 39:3 (2011), 1122--1136. 

\bibitem{Dur}Rick Durrett, \afs{https://doi.org/10.1017/9781108591034}{Probability---theory
and examples}. Fifth edition. Cambridge Series in Statistical and
Probabilistic Mathematics, 49. Cambridge University Press, Cambridge,
2019. 

\bibitem{K07}Gady Kozma, \afs{https://doi.org/10.1007/s11511-007-0018-8}{The scaling limit of loop-erased random walk in three dimensions}.
Acta Math. 199 (2007), no. 1, 29--152. 

\bibitem{Law91}Gregory F. Lawler, \afs{https://link.springer.com/book/10.1007/978-1-4614-5972-9}{Intersections of random walks}.
Probability and its Applications. Birkhäuser Boston, Inc., Boston,
MA, 1991.

\bibitem{LL}Gregory F. Lawler and Vlada Limic, \afs{https://doi.org/10.1017/CBO9780511750854}{Random walk: a modern introduction}.
Cambridge Studies in Advanced Mathematics, 123. Cambridge University
Press, Cambridge, 2010. 

\bibitem{LP}David A. Levin and Yuval Peres, \afs{https://doi.org/10.1090/mbk/107}{Markov chains and mixing times}.
Second edition. With contributions by Elizabeth L. Wilmer. With a
chapter on \textquotedbl Coupling from the past'' by James G. Propp
and David B. Wilson. American Mathematical Society, Providence, RI,
2017.

\bibitem{Lind}Torgny Lindvall, \emph{Lectures on the coupling method}.
Corrected reprint of the 1992 original. Dover Publications, Inc.,
Mineola, NY, 2002.

\bibitem{marriage}various authors, \afs{https://en.wikipedia.org/w/index.php?title=Hall%27s_marriage_theorem&oldid=1217464423}{Hall's marriage theorem},
Wikipedia, retrieved 6/4/2024.
\end{thebibliography}
\end{document}